\documentclass[11pt]{amsart}
\usepackage{amssymb, amsmath, amsthm,ulem}
\usepackage[latin1]{inputenc}
\usepackage{graphicx}

\usepackage[hidelinks]{hyperref}

\usepackage{color}
\usepackage{ulem}
\usepackage{epstopdf}
\usepackage{cancel}
\usepackage{tikz}

\usepackage[a4paper, centering]{geometry}

\usepackage{color}
\usepackage{graphicx}

\usepackage{scalerel,stackengine}

\usepackage{mathtools}

\newtheorem{theorem}{Theorem}

\newtheorem{lemma}[theorem]{Lemma}
\newtheorem{corollary}[theorem]{Corollary}
\theoremstyle{remark}

\theoremstyle{definition}
\newtheorem{definition}[theorem]{Definition}
\theoremstyle{remark}

\definecolor{verde}{RGB}{20,150,100}
\definecolor{purple}{RGB}{200,30,200}

\newcommand{\EEE}{\color{black}}

\stackMath
\newcommand\reallywidecheck[1]{%
\savestack{\tmpbox}{\stretchto{%
  \scaleto{%
    \scalerel*[\widthof{\ensuremath{#1}}]{\kern-.6pt\bigwedge\kern-.6pt}%
    {\rule[-\textheight/2]{1ex}{\textheight}}%
  }{\textheight}%
}{0.5ex}}%
\stackon[1pt]{#1}{\scalebox{-1}{\tmpbox}}%
}

\def\R{\mathbb{R}}
\def\C{\mathcal{C}}
\def\N{\mathbb{N}}

\newcommand{\Om}{\Omega}

\newcommand{\sq}{\subseteq}

\def \e{\varepsilon}

\def\ds{\displaystyle}

\begin{document}
\title[]{ 
Concavity  and hot spots in elliptic problems \\  under mixed boundary conditions }

\bigskip\bigskip

\vfill\eject 
\author[]{Dorin Bucur, Ilaria Fragal\`a}

\thanks{}

\address[Dorin Bucur]{
Universit\'e  Savoie Mont Blanc, Laboratoire de Math\'ematiques CNRS UMR 5127 \\
  Campus Scientifique \\
73376 Le-Bourget-Du-Lac (France)
}
\email{dorin.bucur@univ-savoie.fr}

\address[Ilaria Fragal\`a]{
Dipartimento di Matematica \\ Politecnico  di Milano \\
Piazza Leonardo da Vinci, 32 \\
20133 Milano (Italy)
}
\email{ilaria.fragala@polimi.it}

\keywords{Mixed boundary conditions, convex cones, concavity and monotonicity of solutions, hot spots, Brunn-Minkowski type inequalities}
\subjclass{35J25, 26B25, 52A40}

\makeatletter
\def\@setsubjclass{%
  \itshape 2020 Mathematics Subject Classification.\enspace
  \upshape\@subjclass\@addpunct.}
\makeatother\date{\today}

\begin{abstract}  We consider the torsion function and the first Laplacian eigenfunction
in a convex curvilinear sector in the plane, under homogeneous Neumann
conditions on the two  straight   lateral sides and a homogeneous Dirichlet
condition on the remaining part of the boundary.
We prove
that they are, respectively, strictly $(\frac 1 2)$-concave and strictly
log-concave, provided 
the interior angles  at the vertices are at most $\frac \pi 2$. Under the same assumption,  
  we further establish a billiard-type concavity, obtained through reflections
across the Neumann sides.
As a consequence, we deduce that there exists a unique hot spot, located at
the Neumann--Neumann vertex, and that some monotonicity properties hold
along suitable segments.
Finally, we prove that the associated variational energies, namely the mixed
torsional rigidity and the first mixed Laplacian eigenvalue, satisfy
Brunn--Minkowski type inequalities in the class of convex curvilinear sectors
with a fixed opening angle.

\end{abstract} 

\maketitle

\section{Introduction} 
 The main aim of this paper is to investigate the concavity of solutions to elliptic problems posed in convex curvilinear sectors in the plane under mixed Dirichlet--Neumann boundary conditions, and subsequently to identify the location of their hot spots, together with the geometry of the heat distribution around the maximum.    
  More precisely, we consider bounded convex subsets of 
$\R ^2$ whose boundary consists of two line segments sharing a common endpoint, and a curve. On the two segments we impose homogeneous Neumann boundary conditions, while on the curve we impose a homogeneous Dirichlet condition.

Starting from the 1989 paper \cite{BerPac89} by Berestycki and Pacella on the symmetry properties of solutions  to elliptic boundary value problems of similar   kind,   their qualitative behavior has been extensively studied. Besides symmetry, the focus has been on monotonicity properties \cite{CY18,  YCG21,YCL18,  Zhu01}, and more recently also on related Serrin-type problems \cite{ PPR24, PT20, PT21}.
Particular attention has been attracted, over the past few years, by the case of
the first Laplace eigenfunction on triangles under mixed boundary conditions \cite{AR23, LR17,S16}.
The motivation is the natural link, occurring via its nodal line, with the second Neumann Laplace eigenfunction of triangles,
especially in connection with the hot spots conjecture: formulated by Rauch in 1974, it asserts that its extrema are attained exclusively on the boundary (see \cite{BB99, JM20} and references therein). We point out that a similar hot spot question has been raised 
within the framework of the Polymath research thread \cite{PM}, for the first Laplace eigenfunction of a triangle with one Dirichlet side and two Neumann ones. 
In this case, the question, recently studied in \cite{LY26, Hat24, AR25, CGY26}, 
is whether the maximum occurs only at the vertex opposite to the Dirichlet side.

\smallskip
In this context, to the best of our knowledge, no concavity results are available for PDEs with mixed boundary conditions.  
In fact, the study of concavity properties of solutions to elliptic PDEs has a long history, which is largely devoted to Dirichlet boundary value problems. 
It began in the 1970s, when Makar-Limanov proved that the torsion function is power-concave in planar convex domains
\cite{maklim}, and Brascamp--Lieb established that the first Dirichlet Laplacian eigenfunction is log--concave \cite{BL76}. Since then, several methods, 
now classical, 
have been developed to treat more general PDEs, such as the concavity principle by Korevaar \cite{K83}, the method of continuity by Caffarelli--Friedman \cite{CafFri, KorLew}, and the convex envelope method of Alvarez--Lasry--Lions \cite{ALL} (for further references, see 
the monograph \cite{Kbook} and the more recent survey \cite{GuanMa}). 
All these methods rely on some a priori knowledge of suitable concavity properties near the boundary, which are typically available only under Dirichlet type
conditions. The sensitivity of concavity properties to different boundary conditions and to the regularity of the domain is highlighted by the failure of log--concavity of the first Robin Laplacian eigenfunction on most planar polygons when the Robin parameter is positive and sufficiently small \cite[Theorem 1.2]{AnClHa}. This phenomenon is in fact related to the lack of concavity of the solution to the torsion problem with constant Neumann boundary conditions, which emerges in the asymptotic analysis as the Robin parameter varies near $0$ \cite[Corollary 8.3]{AnClHa}.
On the other hand, the log--concavity of the first Robin eigenfunction can be recovered, in any space dimension, when the convex domain is smooth and the Robin parameter is sufficiently large \cite{CFrobin}. We shall return to Robin boundary conditions at the end of this Introduction, where we propose some natural open questions related to our results. 

\smallskip We are going to confine our attention to the mixed torsion and Laplacian eigenvalue problems in a   convex curvilinear sector in the plane. Namely, we let $\Om$ be a subset of  $\R^2$  
such that $\partial \Om$ contains two (open)  line segments $S'$ and $S''$ with a common endpoint   $O$; then we decompose $\partial \Om$ as 
$$\partial \Om = \Gamma_N \cup \Gamma _ D \cup O\, , \qquad \text{ with } \quad 
\Gamma_N := S' \cup S''\,, \quad \Gamma_D :=  {\partial \Om \setminus (\Gamma_N \cup  O) }   \,.$$
In particular, this framework includes the cases when $\Om$ is a triangle with two Neumann sides and a Dirichlet one, or a circular sector in which the Neumann part of the boundary is the angular one, and the Dirichlet part is the arc of circle.
   Denoting by $\nu$ the outward unit normal to $\Om$,  we consider the following boundary value problems:%
\begin{equation}\label{f:mixedpb} 
\begin{cases}
- \Delta u = 1 & \text{ in } \Om 
\\
u = 0 & \text{ on } \Gamma _D \\ 
\partial _\nu u = 0 & \text{ on }  \Gamma _N  \,,
\end{cases}
\qquad\qquad 
\begin{cases}
- \Delta u = \lambda(\Om) u  & \text{ in } \Om 
\\
u = 0 & \text{ on } \Gamma_D 
\\ 
\partial _\nu u = 0 & \text{ on }  \Gamma_N \,.
\end{cases}
\end{equation}
We refer to the unique solution of the left hand side problem as the {\it mixed torsion function} of $\Omega$. 
As for the  right hand side problem, we assume that  $\lambda(\Omega)$ is the first eigenvalue, 
i.e., the smallest positive value for which a nontrivial solution exists. Then such a solution has a constant sign, and it is 
unique up to a multiplicative factor; when it is nonnegative and normalized in $L ^ 2(\Om)$, we refer to it as the 
 {\it mixed Laplacian ground state} of $\Omega$. 

The junction points  $\Gamma_D \cap  \overline{  S' }  $ and $\Gamma_D \cap   \overline{  S'' }  $, 
will be called the (DN)-vertices  of $\Omega$, while the junction point 
$O :=  \overline{ S' }     \cap     \overline{ S'' }  $ will be called  the (NN)-vertex.  
By {\it interior angle of $\Omega$ at a vertex $p$}, we mean the opening angle of the tangent cone of $\Omega$ at $p$ 
(i.e., the cone generated by $\Omega - p$). 

\smallskip
Our first main result  states that   classical concavity results   for pure Dirichlet problems  continue to hold in the setting of mixed boundary conditions:

\begin{theorem}\label{t:concavity}  
Let $\Om\subset \R ^2$ be     a convex curvilinear sector    such that the interior angles at its three vertices are in $(0, \frac{\pi}{2}]$. 
Then: 

--  if $u$ is the mixed torsion function of $\Om$, the Hessian of   $u ^ {\frac 1 2}$ is  negative definite in $\Om$;
\smallskip

--  if $u$ is the   mixed ground state  of $\Om$, the Hessian of $\log u$ is negative definite in $\Om$. 

\end{theorem}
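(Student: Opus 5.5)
The natural strategy is to reduce the mixed problem to a pure Dirichlet problem on a larger convex domain by reflection across the Neumann sides, and then invoke the classical results (Makar-Limanov for the torsion function, Brascamp--Lieb for the ground state). Since the angle at $O$ is at most $\frac\pi2$, reflecting $\Om$ across the line containing $S'$ and then across the line containing $S''$ produces a domain $\widetilde\Om$, the union of $\Om$ and its reflected copies. The interior angle of $\widetilde\Om$ at $O$ is then $4\theta\le 2\pi$, where $\theta$ is the angle at $O$, so this only works directly for special angles. I would therefore not rely on a single global unfolding. Instead, I would use \emph{local} even reflection across each Neumann side separately. This extends $u$ to a solution of the same equation in a neighbourhood of every point of $S'$ (resp.\ $S''$). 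Moreover, the extended function is symmetric, so its Hessian there is controlled by the interior one.

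The core of the argument would be a maximum-principle argument on the concavity function in the spirit of Korevaar's concavity principle. Set $v=-u^{1/2}$ (torsion) or $v=-\log u$ (eigenfunction), so that $v$ solves a quasilinear equation of the type considered by Korevaar--Kennington, for which the two-point concavity function
\[
\mathcal C(x,y,t)=v\big((1-t)x+ty\big)-(1-t)v(x)-tv(y)
\]
cannot attain a positive interior maximum. The steps are as follows.

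(i) First establish boundary behaviour near $\Gamma_D$. Near the Dirichlet arc away from the vertices, $v\to+\infty$ with $|\nabla v|\to\infty$ in the normal direction, so $\mathcal C$ cannot be maximized with a point on $\Gamma_D$; this is the classical input.

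(ii) Next exclude maxima with a point on the Neumann sides. If $x\in S'$, reflect: either the segment can be pushed inside using $\partial_\nu v=0$ (a first-variation argument moving $x$ along the normal decreases nothing but allows an interior comparison), or one replaces $x$ by its reflection and uses the evenness of the local extension. Convexity of the reflected configuration is needed here, and this is exactly where the angle condition at $O$ and at the (DN)-vertices enters.

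(iii) Then analyse the three vertices via explicit local asymptotics. At $O$, the function $u$ is smooth after reflection because $\theta\le\frac\pi2$; the even extension is $C^2$ when $\pi/\theta$ is an integer, and in general one uses the expansion $u=u(O)+c\,r^{\pi/\theta}\cos(\pi\phi/\theta)+\dots$, and $\pi/\theta\ge2$ yields the needed regularity. At a (DN)-vertex with angle $\alpha\le\frac\pi2$, the leading term is $r^{\pi/(2\alpha)}\cos(\pi\phi/(2\alpha))$ with exponent $\ge1$. One checks by hand that $u^{1/2}$ and $\log u$ of this model are concave in the sector (they behave like the one-dimensional profile), so $\mathcal C\le0$ near such pairs.

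This yields weak concavity. Strictness, i.e.\ negative definiteness of the Hessian, then follows from the constant-rank theorem (Caffarelli--Friedman / Korevaar--Lewis) applied to $D^2v$. This theorem holds for these equations and, after local reflection, also near $\Gamma_N$. The Hessian has constant rank in $\Om$, and rank $<2$ is excluded by the boundary blow-up at $\Gamma_D$.

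The main obstacle will be step (iii) at the (DN)-vertices, together with step (ii) when a maximizing segment touches both a Neumann side and the Dirichlet part near such a vertex. There the solution is only Hölder-type and the reflection trick fails. I would first try to construct explicit concave barriers from the model $r^{\pi/(2\alpha)}\cos(\pi\phi/(2\alpha))$ and compare via the maximum principle; this is where the hypothesis $\alpha\le\frac\pi2$ should be sharp.
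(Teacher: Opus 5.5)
Your proposal has a genuine gap in its central step. The Korevaar--Kennington function $\mathcal C(x,y,t)$ is a two-point, nonlocal quantity, but every tool you bring to the Neumann sides and to the (NN)-vertex $O$ is local.

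\textbf{Neumann sides.} On $\Gamma_D$ a boundary maximum is excluded because $|\nabla v|=+\infty$ there, so pushing $x$ inward raises $\mathcal C$ at an infinite rate. Suppose instead the maximum sits at $(x,y,t)$ with $x\in S'$ and $y$ far from $S'$. Then the Neumann condition only yields the one-sided condition $\nabla v(z)\cdot\nu'\ge 0$ at $z=(1-t)x+ty$. Nothing available at that stage contradicts this; only the strict convexity you are trying to prove would exclude it. Korevaar's second-order comparison is also unavailable in the normal direction, since the variation there is one-sided. Reflection does not repair this. Take $x'$ outside $\Om$ with mirror image $\hat x'\in\Om$. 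The points $(1-t)x'+ty$ and $(1-t)\hat x'+ty$ differ by a translation normal to $S'$, not by a reflection. Comparing the corresponding values of $\mathcal C$ would therefore require monotonicity of $v$ in the normal direction, and $(1-t)x'+ty$ may even leave the region where the local extension is defined.

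\textbf{The vertex $O$.} A local model profile controls $\mathcal C$ only when both points are near $O$. At a maximum $(O,y,t)$ with $y\in\Om$ and $t\in(0,1)$, inward variations of $x$ and stationarity in $t$ give only $v(y)\le v(O)$, i.e.\ $u(y)\ge u(O)$. This is a contradiction only if you already know that $O$ is the unique maximum point of $u$. That is Corollary~\ref{c:hotspot}(i), a consequence of the theorem, not an input. Tellingly, when the paper does run a Korevaar-type argument (for \eqref{f:BMt}), it must work on an $(n+1)$-fold unfolding, where Neumann points become interior after a rotation by $2\alpha$. It also disposes of all configurations involving $O$ precisely by invoking Corollary~\ref{c:hotspot}.

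The paper's proof works because it uses a \emph{local} criterion: the continuity method of Caffarelli--Friedman together with the constant rank theorem \cite{KorLew}. It follows $\nabla^2 v_t$ along a monotone family of sectors that deforms a circular sector (where positivity is explicit) into $\Om$, and looks at the first parameter where positive definiteness fails.
\begin{itemize}
\item Since positive definiteness is pointwise, local even reflection genuinely turns a failure on a Neumann side into an interior failure. That contradicts constant rank combined with full rank near $\Gamma_D$.
\item The (DN)-vertices are never analysed. One first reduces, by approximation with strictness restored by constant rank, to (DN)-angles exactly $\frac\pi2$. Local reflection then makes these vertices regular points of a smooth enlarged Dirichlet arc.
\item $O$ is handled by Lemma~\ref{l:expansion}, which gives $\nabla^2u=-\frac12 I+o(1)$ near $O$ for the torsion function (and the analogue for the ground state) when the (NN)-angle $\alpha$ is below $\frac\pi2$. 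The case $\alpha=\frac\pi2$ is covered by global four-fold reflection.
\end{itemize}
This last point shows what your step (iii) must supply at $O$: strict concavity, not merely regularity.

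Your (DN) model is also off for the torsion function at a (DN)-angle $\beta<\frac\pi4$. There the leading term is the particular solution $r^2g(\phi)$. Moreover, the square root of $r^{\pi/(2\beta)}\cos\frac{\pi\phi}{2\beta}$ is positively homogeneous of degree $\frac{\pi}{4\beta}>1$, hence not concave.
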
 
 We point out that the threshold imposed on the interior (DN) angles is necessary. Indeed, as stated in Corollary \ref{c:hotspot} below, Theorem \ref{t:concavity} implies in particular that the super-level sets of $u$ are nested convex curvilinear sectors meeting $\Gamma_N$ at  right   angles. Such a geometry is impossible when the interior angle at a (DN) vertex exceeds $\frac{\pi}{2}$.
On the other hand,  on a non isosceles triangle, when the interior angle at the (NN) vertex exceeds $\frac{\pi}{2}$,   the convexity of the level sets  is incompatible with the simultaneous presence of two critical points, occurring for the mixed ground state. This follows from \cite[Corollary 1.6]{LY26}, which asserts that in such a situation the hot spot does not coincide with the (NN) vertex.  For the mixed torsion function,  elementary numerical computations suggest the same behaviour.

\smallskip 
The proof of Theorem \ref{t:concavity} is straightforward in the particular case 
when the interior angle at the (NN)-vertex is of the form $\frac{\pi}{n  }$, with $n \in \N$, $n \geq 2$.  Indeed, in this case it follows 
from the classical results in \cite{maklim, BL76}, upon making the following observation: a finite number of reflections of 
$\Om$ and of the solution to the mixed torsion or eigenvalue problem across the Neumann sides
yield, respectively, a convex domain composed of  $2n$   congruent copies of $\Om$,  
and the solution to the corresponding Dirichlet problem on it; see Figure \ref{fig:1} below. 

 \begin{figure}[ht]
 \center
 \includegraphics[width=4cm]{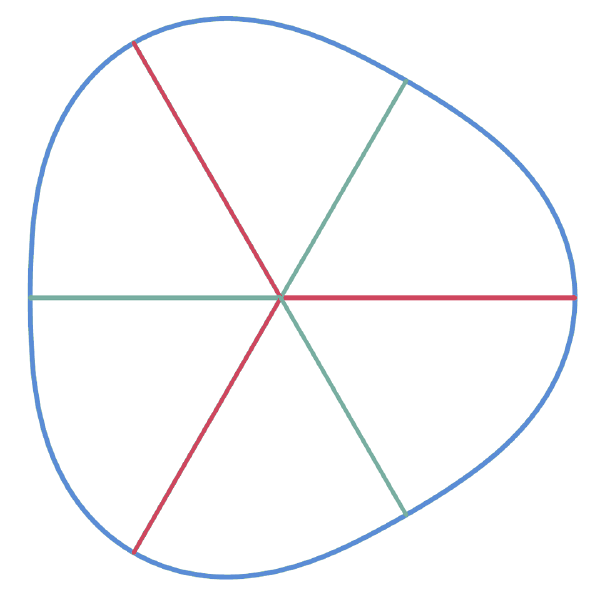}
 \caption{A convex domain made by $6$ copies of  a convex curvilinear sector  with interior  angle  $\frac{\pi}{3}$ at the (NN)-vertex.}     
 \label{fig:1}   
 \end{figure} 
 
However, this argument breaks down as soon as one considers the case of an arbitrary interior angle of $\Om$ at its (NN)-vertex, 
not necessarily of the form $\frac{\pi}{n} $. So, to show that yet the result  holds, we need to provide a different, less elementary proof: it is based on a reworking of the continuity method by Caffarelli--Friedman \cite{CafFri}, 
which involves a local extension of the solution across the Neumann sides 
and an asymptotic expansion near the (NN)-vertex. 
 We point out that this approach can be employed to address the level convexity, monotonicity, and location of hot spots of solutions to more general elliptic PDEs of the form
$\Delta u = f(u,\nabla u)$
in  convex curvilinear sectors   of $\mathbb{R}^2$, when $u$ satisfies the same mixed boundary conditions considered above and suitable structural assumptions are imposed on the source term $f$.

\smallskip 
 
 We now state  some consequences of Theorem \ref{t:concavity}.  
 
 First, we  give  some information concerning the location and uniqueness of the hot spot, as well as the monotonicity of the solutions along certain segments.
 
  Let us mention that,   in the specific case of the first mixed eigenfunction, 
statements (i)-(ii) of Corollary \ref{c:hotspot} are covered by  \cite[Theorem 1.1]{AR25} (dealing more generally with curvilinear triangles);
further results in the same vein were previously obtained in
 \cite[Section 4]{CGY26},  \cite[Theorem 1.1]{Hat24}, \cite[Corollary 1.2 (3)]{LY26}.
 
  Nevertheless, we emphasize that our results go beyond establishing the location and uniqueness of the maximum point: 
they also provide 
a description of the geometry of the surrounding level curves, thereby offering a clear picture of how heat concentrates near the thermal peak.

\begin{corollary}\label{c:hotspot} 
Under the same assumptions of Theorem \ref{t:concavity},  if $u$ is either  the mixed torsion function or the mixed ground state of $\Om$,
and $O$ denotes the (NN)-vertex, it holds:
\begin{itemize}
\item[(i)]  $u$  attains its unique local (and global) maximum over $\overline \Om$ at $O$; 
\item[(ii)]  $u$ is strictly monotone increasing, going from $x$ to $O$,  along any oriented segment  $[x, O]$ contained into $\overline \Om$; 
\item[(iii)] $u$ is strictly monotone increasing, going from $x$ to $y$,  along any oriented segment $[x, y]$ which is contained into $\overline \Om$ and is  orthogonal to $\Gamma _N$ at $y$. 
\end{itemize} 
\end{corollary}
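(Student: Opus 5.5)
We derive everything from Theorem \ref{t:concavity}, i.e. the strict concavity of $v:=u^{1/2}$ (torsion) or $v:=\log u$ (ground state) in $\Om$. Both transforms are strictly increasing functions of $u$, so monotonicity statements and maximizers for $u$ and $v$ coincide. We also use that $u>0$ in $\Om\cup\Gamma_N\cup\{O\}$ (strong maximum principle and Hopf lemma away from $\Gamma_D$), that $u=0$ on $\Gamma_D$, and that $u$ is $C^1$ up to $\Gamma_N$ away from the vertices, with $\partial_\nu u=0$ there.

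\textbf{Step 1 (statement (ii)).} Fix $x\in\overline\Om$ with $[x,O]\subset\overline\Om$, and let $e=(O-x)/|O-x|$. The restriction $g(t)=v(x+te)$ is strictly concave on the open part of the segment lying in $\Om$. If $x$ and $O$ lie on the same side $\overline{S'}$, the segment lies in $\Gamma_N$; we handle this case by approximating with nearby interior segments, or directly using the tangential concavity obtained in the limit. By strict concavity, $g$ is strictly increasing up to $O$ as soon as $g'(t)\ge 0$ near the endpoint $t=|O-x|$, i.e. as soon as $\limsup \nabla v\cdot e\ge0$ when approaching $O$. I will show that $\nabla v(z)\cdot(O-z)\ge 0$ for $z$ near $O$. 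The natural route is the asymptotic expansion at the (NN)-vertex used in the proof of Theorem \ref{t:concavity}. Near $O$ we have $u=u(O)+c\,r^{\pi/\alpha}\cos(\tfrac{\pi}{\alpha}\theta)+\dots$ in polar coordinates adapted to the sector, together with the regular terms coming from the source, which are $-r^2/4$ for the torsion function. Since $\alpha\le\pi/2$, the exponent $\pi/\alpha\ge2$. One must show that the radial derivative $\partial_r u\le 0$ near $O$. This follows from the expansion combined with concavity: because $v$ is concave, its directional derivative along any ray toward $O$ is monotone, hence has a limit. That limit equals $-\partial_r v$ at $O$, which is $0$ because the gradient at $O$ must vanish by the two independent Neumann conditions ($\nabla u(O)=0$, since the normals to $S'$ and $S''$ span $\R^2$). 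Concavity then gives $g'(t)\ge \lim g'=0$ before the endpoint, with strict inequality by strict concavity. This yields (ii).

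\textbf{Step 2 (statement (i)).} Every point $x\in\overline\Om$ is joined to $O$ by a segment in $\overline\Om$, by convexity. By (ii), $u(x)<u(O)$ for $x\neq O$, so $O$ is the unique global maximizer. A local maximizer $x\neq O$ is also excluded: $u$ increases strictly along $[x,O]$ starting from $x$, so points arbitrarily close to $x$ have larger values. For $x\in\Gamma_D$, $u(x)=0<u$ inside.

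\textbf{Step 3 (statement (iii)).} Let $[x,y]\subset\overline\Om$ be orthogonal to $\Gamma_N$ at $y$, say $y\in S'$, and let $e=(y-x)/|y-x|$, which is the outward normal $\nu$ at $y$. Along the segment, $g(t)=v(x+te)$ is strictly concave, and its derivative at the endpoint is $\nabla v(y)\cdot\nu=\partial_\nu v(y)=0$ by the Neumann condition. Hence $g'>0$ before the endpoint and $g$ is strictly increasing. If $y=O$, this reduces to (ii). If $x\in\Gamma_D$, we use the continuity of $g$ at $x$.

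\textbf{Main obstacle.} The delicate point is the endpoint behaviour: justifying $C^1$ regularity of $v$ up to $\Gamma_N$ and at $O$, and passing to the limit in the derivative of the concave function $g$. At $O$ the vanishing of $\nabla u$ relies on the vertex expansion with exponent $\pi/\alpha\ge2$, which holds when $\alpha\le\pi/2$. A secondary issue is the treatment of segments lying in $\Gamma_N$ or touching the (DN)-vertices, which I would settle by continuity and approximation by interior segments.
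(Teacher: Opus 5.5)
Your argument is correct and uses the same ingredients as the paper's proof, namely Theorem~\ref{t:concavity} together with $\nabla u(O)=0$ from Lemma~\ref{l:expansion}, and you argue (iii) exactly as the paper does. The only difference is the order: you obtain (ii) by running the endpoint-derivative argument of (iii) at $O$ and then deduce (i), whereas the paper first gets (i) from $O$ being a local maximum and the uniqueness of local maxima of a strictly concave function, and then reads off (ii) from the convexity of the super-level sets.

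One small point concerns segments lying in $\overline{S'}$ or $\overline{S''}$. There, approximation by interior segments, or the limiting tangential concavity, only gives weak monotonicity. Strictness requires strict concavity of $v$ up to $\Gamma_N$, which you can get e.g.\ via the even reflection across the Neumann side and the constant rank theorem, as in the proof of Theorem~\ref{t:concavity}; the paper's terse argument does not spell this out either.
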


Next, via an unfolding argument, we   establish the concavity behaviour of solutions
along ``billiard trajectories''. To be more precise,   if  $\Om\subset \R ^2$ is  a    convex curvilinear sector  we  say that a function 
$v: \overline{\Omega} \to \mathbb{R}$ is {\it strictly billiard concave} in $\Om$ if the function $v \circ \gamma$ 
is strictly concave on $[0,T]$ whenever $\gamma : [0,T] \to \overline{\Omega}$,  is a {\it billiard trajectory}   parametrized by arc length
  (see Definition \ref{d:traj} for the detailed meaning of the terminology). 
We have:  

\begin{corollary}\label{c:billiardconcavity} Under the same assumptions of Theorem \ref{t:concavity}:

\begin{itemize}
\item[--]  the mixed torsion function $u$ of $\Om$ is  strictly $(\frac 1 2)$-billiard concave (i.e., $u ^ {\frac 1 2}$ is strictly billiard concave);
\item[--]  the mixed ground state $u$ of $\Om$ is strictly log--billiard concave (i.e., $\log u$ is strictly billiard concave). 
\end{itemize} 

\end{corollary}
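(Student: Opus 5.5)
We reduce Corollary \ref{c:billiardconcavity} to Theorem \ref{t:concavity} by unfolding the billiard trajectory. Set $v:=u^{\frac12}$ for the mixed torsion function and $v:=\log u$ for the mixed ground state; by Theorem \ref{t:concavity}, $D^2 v$ is negative definite in $\Om$. Let $\gamma:[0,T]\to\overline\Om$ be a billiard trajectory parametrized by arc length. By Definition \ref{d:traj}, it is a finite concatenation of straight segments, reflected by the law of equal angles at points of $\Gamma_N$. Strict concavity of $v\circ\gamma$ on $[0,T]$ splits into two checks: strict concavity on each straight piece, and a nonincreasing jump of the one-sided derivative at each reflection time.

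On an open straight piece lying in $\Om$, $(v\circ\gamma)''=\langle D^2v(\gamma)\dot\gamma,\dot\gamma\rangle<0$, so $v\circ\gamma$ is strictly concave there. Some pieces may lie on $\Gamma_N$ or touch $\partial\Om$ at isolated points. For these we will use that $u$ is $C^1$ up to $\Gamma_N$ away from the vertices, and that $v$ is concave on $\overline\Om$ by continuity. Strictness survives because a concave function whose second derivative is strictly negative on a dense open set is strictly concave.

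At a reflection time $t_0$ with $\gamma(t_0)=p\in\Gamma_N$ (not a vertex), the incoming and outgoing directions $\dot\gamma(t_0^-)$ and $\dot\gamma(t_0^+)$ have the same tangential component and opposite normal components. The Neumann condition gives $\partial_\nu v(p)=0$. Hence
$$\tfrac{d}{dt}(v\circ\gamma)(t_0^+)=\nabla v(p)\cdot\dot\gamma(t_0^+)=\nabla v(p)\cdot\dot\gamma(t_0^-)=\tfrac{d}{dt}(v\circ\gamma)(t_0^-),$$
so $v\circ\gamma$ is $C^1$ at $t_0$. A continuous function that is $C^1$ on $[0,T]$ and strictly concave on each piece is strictly concave on $[0,T]$, since its derivative is strictly decreasing.

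Equivalently, one can unfold the domain: reflect $\Om$ across $S'$ or $S''$ at each bounce and extend $v$ evenly. The trajectory then becomes a straight segment in a chain of copies of $\Om$. The even extension is $C^1$ across the reflecting segment by the Neumann condition, and concave on each copy, so it is concave along the segment. This viewpoint is convenient to cite, but the derivative-matching argument above is self-contained.

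The main obstacle is the vertices and the regularity up to $\Gamma_N$. A trajectory may hit the (NN)-vertex $O$; Definition \ref{d:traj} presumably either excludes this or continues by retracing, and the same derivative argument must then work with $\nabla v(O)=0$. That vanishing gradient should follow from Corollary \ref{c:hotspot}(i) together with the asymptotic expansion at $O$ used in the proof of Theorem \ref{t:concavity}. At the (DN)-vertices and on $\Gamma_D$, $v=-\infty$ (log case) or $v=0$ with infinite slope; there we only need $\gamma$ to terminate, and concavity on the closed interval follows by continuity (upper semicontinuity in the log case). For the regularity of $u$ up to the open Neumann segments, I would invoke the local even reflection across $S'$ and $S''$, which makes $u$ smooth up to those segments away from the vertices.
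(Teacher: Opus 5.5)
Your proof is correct, but it takes a different route from the paper's. The paper unfolds the whole trajectory at once. It picks $n$ with $\alpha\in(\frac{\pi}{n-1},\frac{\pi}{n-2}]$, which gives two things:
\begin{itemize}
\item the $n$ reflected copies do not overlap ($n\alpha\le 2\pi$);
\item every billiard trajectory becomes a straight segment in the positive or negative $n$-unfolding ($(n-1)\alpha>\pi$).
\end{itemize}
It then uses that the unfolded function is $C^2$ across the Neumann sides and at $O$ (the latter via Lemma \ref{l:expansion}), with definite Hessian on each copy.

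You instead glue along the curve. You get strict concavity on each open piece from Theorem \ref{t:concavity}, and you match one-sided derivatives at each bounce. For the matching you use only that $\nabla v(p)$ is tangent to $\Gamma_N$ and that the reflection law preserves tangential components. This buys you three things: you need only $C^1$ regularity up to $\Gamma_N$ and at $O$; you need no count of bounces and no non-overlap condition; and the argument is insensitive to the exit rule at the vertex.

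On the vertex, two remarks.
\begin{itemize}
\item Definition \ref{d:traj} does not exclude passage through $O$, nor does it retrace. It prescribes a specific outgoing angle, namely the straight continuation in the unfolding.
\item This costs you nothing. Since $\nabla v(O)=0$ (Lemma \ref{l:expansion} for $\alpha<\frac\pi2$, the four-fold reflection for $\alpha=\frac\pi2$), both one-sided derivatives vanish at $t_1$. Your gluing therefore works for any outgoing direction, so your hedge is harmless.
\end{itemize}
Likewise, your treatment of pieces lying on $\Gamma_N$ is superfluous, since the definition requires every open piece to lie in $\Om$.

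What the paper's global unfolding buys is a framework reused in Section \ref{sec:reflections}. There the two-point concavity function $\widetilde{\C}(x,y)$ must be minimized over unfolded sets, which a curve-by-curve gluing argument does not supply.
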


As a second purpose of the paper,  we  establish  the validity of Brunn--Minkowski type inequalities for the variational energies associated 
with the boundary value problems \eqref{f:mixedpb}, namely the   
 {\it mixed torsional rigidity}, and the {\it first mixed Laplacian eigenvalue}.   They can be  defined respectively by 
 \begin{align} 
& \tau  (\Omega)    = \ds- \!\!\!  \inf _{u \in  H^ {1} _ {\Gamma_D} (\Omega ) }  \int _\Omega\big (    |\nabla u|^ 2 -  2 \, u \big )  \, dx 
   =\sup_{u \in H^ {1} _ {\Gamma_D} (\Omega )  \setminus \{ 0 \}  }
\frac{\left(\displaystyle\int_\Omega u\,dx\right)^2}
{\displaystyle\int_\Omega |\nabla u|^2\,dx} \,, &  \label{f:tauvar} 
 \\ 
& \lambda  (\Omega)    = \ds \!\!\!  \inf _{u \in  H^ {1} _ {\Gamma_D} (\Omega ) }
\frac
{\displaystyle\int_\Omega |\nabla u|^2\,dx} 
 {\displaystyle\int_\Omega | u|^2\,dx} \,, &  \label{f:lambdavar} 
 \end{align} 
 where $H^ {1} _ {\Gamma_D} (\Omega )$ is the subspace of Sobolev functions in
 $H^ {1} _ {\Gamma_D} (\Omega )$ having zero trace on $\Gamma _ D$.

 We prove that the Brunn--Minkowski inequalities for the torsional rigidity and the first Dirichlet Laplacian eigenvalue, due respectively to 
Borell \cite{B85} 
and Brascamp--Lieb \cite{BL76}, extend to the mixed functionals defined in 
\eqref{f:tauvar} and \eqref{f:lambdavar}, 
considered on the class   
$\mathcal K_\alpha$ of  convex curvilinear sectors  with a given interior angle $\alpha \in (0, \frac{\pi}{2}]$ at their (NN)-vertex: 
 
 \begin{theorem}\label{t:BM} Let $\Om _0$, $\Om _ 1$  belong to the class $\mathcal K _\alpha$ for some $\alpha \in (0, \frac \pi 2]$, let  $t \in [0, 1]$, and let $\Om _ t:=(1-t) \Om_0 + t\Om_ 1$. 
 We have:
 \begin{align}
& \tau ^ {\frac 1 4} (\Om_t)   \geq ( 1- t) \tau ^ {\frac 1 4}  ( \Om _0) + t \tau ^ {\frac 1 4}  (\Om_ 1) \, ,    & \label{f:BMt}
\\
& \lambda ^ {- \frac 1 2}   (\Om_t)   \geq ( 1- t) \lambda ^ {- \frac 1 2} ( \Om _0) + t \lambda ^ {- \frac 1 2} (\Om _ 1) \,. & \label{f:BMl} 
\end{align} 
  \end{theorem}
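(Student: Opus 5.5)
We will adapt the classical proofs of Borell and Brascamp--Lieb. They build a competitor on $\Om_t$ by a sup-convolution of the solutions on $\Om_0$ and $\Om_1$, and then use the Prékopa--Leindler inequality. The concavity from Theorem \ref{t:concavity} is what makes this work.

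\textbf{Normalization.} All sets in $\mathcal K_\alpha$ can be placed with the (NN)-vertex at the origin and the Neumann sides on the same two rays bounding the cone $C_\alpha$ of opening $\alpha$. Then $\Om_t$ lies in $C_\alpha$, has its vertex at $O$, and is convex; its Neumann sides lie on the same rays. Its interior angles at the (DN)-vertices stay in $(0,\frac\pi2]$, because the tangent cones at the (DN)-vertices combine in a controlled way under Minkowski addition. So $\Om_t \in \mathcal K_\alpha$, and the test space is $H^1_{\Gamma_D}(\Om_t)$: functions on $\Om_t$ vanishing on the curved part $\Gamma_D^t = \partial\Om_t\cap \mathrm{int}\,C_\alpha$.

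\textbf{Torsion.} Let $u_0,u_1$ be the mixed torsion functions, extended by $0$ on $C_\alpha\setminus\Om_i$. By Theorem \ref{t:concavity}, $v_i:=u_i^{1/2}$ is concave on $\overline\Om_i$. Define on $C_\alpha$
$$w(z)=\sup\{\,((1-t)v_0(x)^{q}+t\,v_1(y)^{q})^{1/q}\ :\ z=(1-t)x+ty,\ x\in\overline\Om_0,\ y\in\overline\Om_1\},$$
with the convention $q=1$. Then $w^2$ is admissible on $\Om_t$: it vanishes on $\Gamma_D^t$, since points of $\Gamma_D^t$ are combinations only of points of $\Gamma_D^0$ and $\Gamma_D^1$, thanks to the common cone. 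It is also Lipschitz, and no condition is needed on the Neumann part.

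Two estimates are then needed, as in Colesanti's proof of Borell's inequality:
\begin{itemize}
\item[(a)] a lower bound on $\int_{\Om_t} w^2$, obtained from the Borell--Brascamp--Lieb inequality applied to the level sets of $w$, which are Minkowski combinations of those of $v_0$ and $v_1$;
\item[(b)] an upper bound on $\int|\nabla w^2|^2$, obtained from the gradient relation at the maximizing pair: $\nabla w(z)$ is parallel to $\nabla v_0(x)$ and $\nabla v_1(y)$, with matching $|\nabla v|$ on corresponding level curves.
\end{itemize}
The cleanest route is Colesanti's approach through the functional $\int (|\nabla u|^2-2u)$, reduced to a one-dimensional inequality along level-set parametrizations. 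Inserting (a) and (b) into the Rayleigh-type quotient in \eqref{f:tauvar} and using the homogeneity $\tau(s\Om)=s^4\tau(\Om)$ gives \eqref{f:BMt}.

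\textbf{Eigenvalue.} Follow Brascamp--Lieb. Set $\phi_i=\log u_i$, which is concave by Theorem \ref{t:concavity}. Express $\lambda$ through the heat kernel with mixed boundary conditions, $\lambda(\Om)=-\lim_{s\to\infty}s^{-1}\log p^\Om_s(x,y)$. Then prove log-concavity of $(x,y,\Om)\mapsto p_s^{\Om}$ jointly under Minkowski combinations within $\mathcal K_\alpha$, via the Trotter product formula and Prékopa--Leindler. For the Neumann part, the Gaussian kernel on $C_\alpha$ is replaced by the reflected (Neumann) heat kernel of the cone $C_\alpha$. This yields $\lambda(\Om_t)\le$ the appropriate mean of $\lambda(\Om_0)$ and $\lambda(\Om_1)$. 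The same mean, combined with the scaling $\lambda(s\Om)=s^{-2}\lambda(\Om)$, gives \eqref{f:BMl}.

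\textbf{Main obstacle.} For $\alpha\neq\pi/n$ the Neumann heat kernel of $C_\alpha$ is not obviously log-concave in the required joint sense, so I expect this step to be the hardest. The fallback is to avoid heat kernels entirely and mimic the torsion argument: take the sup-convolution of the log-concave ground states, use Prékopa--Leindler for $\int w^2$, and bound the gradient term from the level-set structure given by Corollary \ref{c:hotspot}. Here the concavity of Theorem \ref{t:concavity} is the essential input.
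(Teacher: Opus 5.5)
There is a genuine gap in each half of your plan, and the main route you propose for \eqref{f:BMl} cannot work.

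\textbf{Torsion.} Estimate (a) is fine. From $w^2(z)\ge\big((1-t)u_0^{1/2}(x)+t\,u_1^{1/2}(y)\big)^2$, Borell--Brascamp--Lieb with exponent $\frac12$ in dimension $2$ gives $\int_{\Om_t}w^2\ge\big((1-t)\tau(\Om_0)^{1/4}+t\,\tau(\Om_1)^{1/4}\big)^4$.

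Estimate (b), however, carries the whole difficulty, and you give no mechanism for it. A first-order relation at the optimal pair cannot bound a Dirichlet integral from above. Moreover, with $q=1$ the optimal pair satisfies $\nabla w(z)=\nabla v_0(x)=\nabla v_1(y)$ with $v_0(x)\neq v_1(y)$ in general, so there are no ``corresponding level curves''.

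What is needed is the PDE combined with second-order information:
\begin{itemize}
\item At an interior optimal pair, $\nabla^2(-w)(z)$ is the $(1-t,t)$-weighted matrix harmonic mean of $\nabla^2(-v_0)(x)$ and $\nabla^2(-v_1)(y)$.
\item Concavity of $M\mapsto 1/{\rm tr}(M^{-1})$ then gives $1/\Delta(-w)(z)\ge(1-t)/\Delta(-v_0)(x)+t/\Delta(-v_1)(y)$.
\item With $-\Delta v_i=(\frac12+|\nabla v_i|^2)/v_i$, this is exactly $-\Delta(w^2)\le 1$.
\item If in addition $\partial_\nu(w^2)=0$ on $\Gamma^t_N$, integration by parts gives $\int|\nabla(w^2)|^2\le\int w^2$, and (a) concludes.
\end{itemize}
By comparison, this would yield precisely the paper's key inequality \eqref{f:Kenn}, i.e.\ $w^2\le u_{\Om_t}$.

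In the mixed setting the real work is to show that optimal pairs never sit on the Neumann sides or at $O$. You set this aside with ``no condition is needed on the Neumann part'', which is true for admissibility but false for the energy bound. The paper obtains \eqref{f:Kenn} by minimizing the concavity function on $(n+1)$-unfoldings:
\begin{itemize}
\item Claims I--II keep Minkowski combinations inside the unfolded $\Om_t$.
\item A double reflection moves Neumann-boundary minimizers into the interior.
\item Colesanti's local interior and Hopf arguments handle interior and Dirichlet points.
\item Corollary~\ref{c:hotspot} excludes minima at $O$ or with $O\in[\overline x,\overline y]$.
\end{itemize}
AM--GM and Pr\'ekopa--Leindler then give \eqref{f:BMt}. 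If completed as above (which also requires an analogue of Lemma~\ref{l:C1} for $-u_i^{1/2}$), your sup-convolution route would be a legitimate alternative; as written it is not a proof.

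\textbf{Eigenvalue.} The heat-kernel route is blocked, not merely hard. Taking $\Om_0=\Om_1$, the Brascamp--Lieb argument would prove that the mixed heat kernel is jointly log-concave in $(x,y)$ at every time. This is false near any Neumann side, for every $\alpha$, including $\alpha=\pi/n$. Already for the Neumann heat kernel of a half-plane, $p_s(x,x)$ is proportional to $1+e^{-d(x)^2/s}$, with $d$ the distance to the boundary line, and this is not log-concave in $d$. So the Pr\'ekopa--Leindler step in the Trotter product fails.

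Your fallback points in the paper's direction but again omits the mechanism. The paper takes the infimal convolution $w=(1-t)v_0\,\square\,t v_1$ of $v_i=-\log u_i$. The key new ingredient is Lemma~\ref{l:C1}: $\nabla v_i$ is a diffeomorphism of $\Om_i$ onto the open cone $X=\{p\cdot\nu'<0,\ p\cdot\nu''<0\}$. Through Lemma~\ref{l:C2}, this guarantees:
\begin{itemize}
\item minimizing pairs are interior;
\item $w$ is smooth, with Hessian equal to the harmonic mean of the Hessians;
\item $w\to+\infty$ on $\Gamma^t_D$ and $\partial_\nu w=0$ on $\Gamma^t_N$.
\end{itemize}
Then ``trace of harmonic mean $\le$ arithmetic mean'' and $\Delta v_i=\lambda(\Om_i)+|\nabla v_i|^2$ give, for $\overline u=e^{-w}$,
\[
\Delta\overline u\ge-[(1-t)\lambda(\Om_0)+t\lambda(\Om_1)]\,\overline u .
\]
Integrating against $\overline u$, with no boundary terms, yields $\lambda(\Om_t)\le(1-t)\lambda(\Om_0)+t\lambda(\Om_1)$ directly, and homogeneity upgrades this to \eqref{f:BMl}. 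Neither a Pr\'ekopa--Leindler bound on $\int\overline u^2$ nor level-set information from Corollary~\ref{c:hotspot} is needed.
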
 

To our knowledge, in the vast literature on Brunn--Minkowski-type inequalities 
for variational energies (see \cite{C96} and references therein), 
there is no previous contribution involving boundary conditions that are not of pure Dirichlet type.  
To obtain the inequalities \eqref{f:BMt} and \eqref{f:BMl}, 
we use different approaches.  To deal with the case of mixed torsional rigidity, we adapt the
Korevaar--Kennington concavity function method \cite{Kenn,K83}. More
precisely, the inequality \eqref{f:BMt} is obtained as a consequence of the
following comparison result for the mixed torsion functions
$u_{\Om_0}$, $u_{\Om_1}$, and $u_{\Om_t}$, which may be regarded as a
generalized form of the power-concavity statement in
Theorem~\ref{t:concavity}:
$$
u^{1/2}_{\Om_t}((1-t)x+ty)
-(1-t)u^{1/2}_{\Om_0}(x)
-t u^{1/2}_{\Om_1}(y)
\geq 0
\qquad \forall x\in \Om_0,\ y\in \Om_1\, , \   t \in [0, 1] \,. 
$$
During the proof, this inequality is actually proved in a stronger billiard-type version,  by working in a kind of ``unfolded manifold'',
built by glueing together multiple orthogonal reflections of the convex
curvilinear sectors $\Om_0$ and $\Om _1$ across their Neumann sides.

For the mixed eigenvalue, we use a more variational approach,  
inspired by the proof given by Colesanti in \cite[Theorem 1]{C96},   which relies on the construction of a suitable test function via infimal convolution,  to be   employed in the characterization of the eigenvalue 
as the infimum of the Rayleigh quotient.  The key new ingredient in our framework is the identification of the image of the gradient of minus the logarithm of the eigenfunction with the infinite cone generated by the two Neumann sides, see Lemma \ref{l:C1}.   

\smallskip We conclude this Introduction  with the following

\medskip 
 {\it Open questions.}   It would be interesting to understand  to what extent  our results  continue to hold in the following more general settings:
\begin{itemize} 
\item[a.] $\Om$ is a curvilinear triangle in the plane (that is, $\partial \Om$ is formed by the junction of three smooth curves), under homogeneous Neumann conditions on two of them, and homogeneous Dirichlet conditions on the remaining one; 
\item[b.] $\Om$ is a convex curvilinear sector in the plane, under homogeneous Robin boundary conditions on the angular part, and homogeneous Dirichlet conditions on the remaining part;
\item[c.] $\Om$ is a bounded convex cone-like domain in $\R^n$ (that is, a convex body obtained as the intersection of a cone with a compact set), under homogeneous Neumann or Robin conditions on the lateral boundary, and homogeneous Dirichlet conditions on the remaining part. 
\end{itemize}

\section{Proofs of Theorem \ref{t:concavity}, Corollary \ref{c:hotspot}, and Corollary \ref{c:billiardconcavity}}

We start by  a preliminary lemma about the second order local behaviour of the solution to the boundary value problems in \eqref{f:mixedpb} near the (NN)-vertex of $\Om$, when  the interior angle at such vertex is acute. 
 A similar analysis, in case of the mixed ground state and up to the first order, was exploited in \cite[Section 4]{JM20}. 

\begin{lemma}\label{l:expansion}
Let $u$ be either the mixed torsion function or the mixed ground state of a convex curvilinear sector   $\Om$,   whose  
interior angle $\alpha$  at the (NN)-vertex  is strictly smaller than $\frac{\pi}{2}$. 
 Then there exist $r_0>0$ and $c>0$ such that
$u$ is of class  $C^{2,\sigma}\bigl(\Omega\cap B_{r_0}(O)\bigr)$, with 
$\sigma=\frac{\pi}{\alpha}-2>0$, and  satisfies
$$\nabla u(O)=0 \,, \qquad  
\nabla^2 u(x)\leq -c\,I \quad \forall x\in \Omega\cap B_{r_0}(O)\,.
$$
In particular, $u$ is strictly concave in a neighbourhood of $O$.  
\end{lemma}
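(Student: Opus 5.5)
Near $O$, the boundary of $\Om$ consists only of the two straight Neumann segments $S'$, $S''$, since $\Gamma_D$ is at positive distance from $O$. The plan is to expand $u$ in Fourier--Bessel form in polar coordinates centred at $O$ and read off the second-order behaviour from the leading terms.

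Place $O$ at the origin with $\Om\cap B_{r_1}(O)=\{(r\cos\theta,r\sin\theta):0<r<r_1,\ 0<\theta<\alpha\}$ for some small $r_1$. Standard elliptic regularity away from the vertex, or Lipschitz-domain theory, gives $u\in H^1$, bounded, and smooth up to the open Neumann sides.

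\emph{Torsion case.} Subtract the particular solution $p(x)=-\frac{|x|^2}{4}$. It satisfies $-\Delta p=1$ and $\partial_\nu p=-\frac{1}{2}\,x\cdot\nu=0$ on both sides, since they are rays through $O$. Then $h:=u-p$ is harmonic in the sector with homogeneous Neumann conditions on $\theta=0,\alpha$. Separation of variables, justified by expanding $h(r,\cdot)$ in the Neumann eigenbasis $\cos(k\pi\theta/\alpha)$ of $(0,\alpha)$ and solving the resulting Euler ODEs in $r$, gives
$$
h(r,\theta)=a_0+\sum_{k\ge1}a_k\, r^{k\pi/\alpha}\cos\Big(\frac{k\pi\theta}{\alpha}\Big).
$$
The $\log r$ and negative-power solutions are excluded because $h$ is bounded and in $H^1$. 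The series converges with all derivatives for $r<r_1$, with coefficient bounds $|a_k|\le C r_1^{-k\pi/\alpha}$.

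\emph{Eigenfunction case.} The same expansion holds with Bessel functions $J_{k\pi/\alpha}(\sqrt{\lambda}\,r)$ in place of $r^{k\pi/\alpha}$. Expanding the $k=0$ term gives $J_0(\sqrt\lambda r)=1-\frac{\lambda r^2}{4}+O(r^4)$, so
$$
u=a_0\Big(1-\frac{\lambda|x|^2}{4}\Big)+O(|x|^4)+\sum_{k\ge1}\ldots,
$$
where $a_0>0$. Indeed $a_0$ equals the angular mean of $u$ as $r\to0$, which is positive because $u>0$ up to $O$ by the Hopf lemma and Harnack on the Neumann reflection. Either way,
$$
u = c_0 - \frac{\mu}{4}|x|^2 + R(x),\qquad R(x)=\sum_{k\ge1}b_k\, r^{k\pi/\alpha}\cos\Big(\frac{k\pi\theta}{\alpha}\Big)\,(1+O(r^2)),
$$
with $\mu=1$ in the torsion case and $\mu=\lambda a_0>0$ in the eigenfunction case. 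The smooth radial part $c_0-\frac{\mu}{4}|x|^2+O(|x|^4)$ has Hessian $-\frac{\mu}{2}I+O(r^2)$.

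The key point is the size of $R$. Since $\alpha<\frac{\pi}{2}$, the exponents satisfy $\beta_k:=k\pi/\alpha\ge\beta_1=\pi/\alpha>2$. Each term $r^{\beta}\cos(\beta\theta)$ has gradient $O(r^{\beta-1})$, Hessian $O(r^{\beta-2})$, and its Hessian is $C^{0,\beta-2}$ when $\beta-2<1$; the terms with $k\ge2$ are smoother still. Summing with the geometric coefficient bounds on $B_{r_0}$ for $r_0<r_1$ gives:
\begin{itemize}
\item $\nabla u(O)=0$;
\item $u\in C^{2,\sigma}(\Om\cap B_{r_0}(O))$ with $\sigma=\frac{\pi}{\alpha}-2$ (capped at $1$ if $\sigma>1$);
\item $\nabla^2u=-\frac{\mu}{2}I+O(r^{\sigma})$.
\end{itemize}
Shrinking $r_0$ then yields $\nabla^2u\le -cI$ with $c=\mu/4$, and strict concavity near $O$ follows because the sector is convex.

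The main obstacle is the rigorous justification of the expansion: convergence of the series with derivatives up to the vertex, and the exclusion of singular solutions, in the Bessel case. This is handled by the coefficient bounds and by $u\in H^1\cap L^\infty$. The hypothesis $\alpha<\frac{\pi}{2}$ enters exactly to make the first nonradial exponent exceed $2$. At $\alpha=\frac{\pi}{2}$ the term $r^2\cos2\theta$ would compete with the radial part, which is why that case is excluded here.
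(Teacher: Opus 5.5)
Your proposal is correct and follows essentially the same route as the paper. Like the paper, you subtract $|x|^2/4$ in the torsion case (resp.\ use the Bessel modes $J_{k\pi/\alpha}(\sqrt{\lambda}\,r)$ in the eigenfunction case), expand in the Neumann cosine basis $\cos(k\pi\theta/\alpha)$ in polar coordinates at $O$, and use $\pi/\alpha>2$ to see that the non-radial modes contribute only $O(r^{\pi/\alpha-2})$ to the Hessian, which is therefore $-\frac{\mu}{2}I+o(1)$. You are in fact slightly more careful than the written proof: you rule out the singular modes ($\log r$, $r^{-k\pi/\alpha}$, $Y_\nu$) by boundedness, take the coefficient bounds directly from $\|u\|_\infty$, check that $a_0>0$ (used tacitly in the paper's $\nabla^2u=-a_0\frac{\lambda}{2}I+o(1)$), and note that $\sigma$ must be capped at $1$ when $\alpha<\pi/3$.
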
 

\proof 
Let  $\Om$ denote the  curvilinear sector,    and let $\Om _0\subseteq  \Om $ be a circular sector of radius $R$, 
having the same (NN)-vertex as $\Om$, and same interior angle  $\alpha$   at it.   
In  a system of polar coordinates with origin at the (NN)-vertex we have 
$\Om _0 = \{ r \in (0, R) \, , \ \theta \in (0, \alpha) \}$.  Let us consider separately the cases of the mixed torsion and of the first mixed eigenfunction. 

\smallskip
Let $u$ be the mixed torsion function of $\Om$. We let 
$w (r) :=    \frac{r^2}{4}  $, and we consider the function $v := u   +   w$. It satisfies 
$$
\begin{cases}
\Delta v = 0 & \text{ in } \Om _0
\\
v = u  + \frac{R^2}{4}  & \text{ on } \partial \Om _0 \cap \{ r = R \} 
\\ 
\partial _\nu v = 0 & \text{ on }   \partial \Om _0 \cap \{ \theta \in \{ 0, \alpha \}  \}   \,. 
\end{cases} 
$$ 
Thus, by separation of variables, we may write $v$ under the form
$$ 
v ( r, \theta ) = \sum _{ k \geq 0 } a_k r ^ {\mu _ k} \cos (\mu _ k \theta ) \, , 
$$ where, for every $k \in \N$,  
$$
\mu _ k = \frac{ k \pi }{\alpha}  \qquad \text{ and  } \qquad 
a _k  = b _k  R ^ { - \mu _k} \,, 
$$
being $b _k$ the Fourier coefficient of the function $f (\theta):= u ( \theta, R) +  \frac{ R ^ 2 }{4}$. 
Accordingly, 
 \begin{equation}\label{f:expru1} u  
 =   v - w    = u (0) -  \frac{1}{4} r ^ 2 +  \sum _{ k \geq 1 }  a_k r ^ {\mu _ k}   \cos (\mu _ k \theta ) \,.
 \end{equation}  

We observe that the series at the right hand side of \eqref{f:expru1}  
converges uniformly in $[0, \alpha]$  
for every fixed $r \in ( 0, R)$, and
uniformly in any compact subinterval of $[0, R)$ for every fixed $\theta \in (0, \pi)$. 
Indeed, at fixed $r$, it is a Fourier series whose coefficients decrease exponentially 
because,  from the smoothness of $f$, for every $n \in \N$ there exists a positive constant $C _n$ such that 
$|b _ k| \leq C _n \mu _k ^ { -n }$. 
On the other hand, at fixed $\theta$, it is a power series with radius of convergence at least $R$ , since 
$$  \limsup _k  |   a_k   \cos (\mu _ k \theta )  |  ^ {1 /k}    \leq  \limsup _k |   C _n \mu _k ^ { -n } R ^ { - \mu _k}    | ^ {1 /k}   =  R ^ { - \frac{\pi}{\alpha}} < R ^ { -1} .$$ 

Since the same arguments can be repeated for the series obtained by formally differentiating 
with respect to $\theta$ or with respect to $r$,  we infer that 
the gradient and the Hessian of $u$ can be computed differentiating by series. 
To that aim, we use their expressions in polar coordinates, i.e., 
$$ \begin{aligned}
& \nabla u   =  u _ r  e _ r + \frac{1}{r} u  _\theta e _\theta  
\\ 
& \nabla ^ 2 u = u _ {rr} ( e  _ r \otimes e _ r  ) +
\Big ( \frac{1}{r} u _ {r \theta} - \frac{1}{ r ^ 2} u _ \theta  \Big )  ( e _ r \otimes e_ \theta + e _ \theta \otimes e_r) 
+ \Big ( \frac{1}{r^2} u _ {\theta \theta} + \frac{1}{ r } u _ r \Big )   ( e _ \theta \otimes e _\theta)\,. 
\end{aligned}
$$ 
Taking into account that $\mu _ 1 = \frac{\pi}{\alpha} > 2$, we obtain the following asymptotic expansions as $r \to 0 ^ +$
(where the infinitesimal are uniform with respect to $\theta$): 
$$
\begin{aligned} 
& u _ r = \Big [ -\frac{r}{2}
+\sum_{k\ge1} a_k\mu_k\, r^{\mu_k-1}\cos(\mu_k\theta)
 \Big ]   = -\frac{r}{2} + o ( r)   \, , 
 \\
 & \frac{ u _ \theta }{r}  =  \Big [ -\sum_{k\ge1} a_k\mu_k\, r^{\mu_k-1}\sin(\mu_k\theta)
 \Big ]  = O ( r) 
 \\
&  u_{rr}(r,\theta)
=
-\frac{1}{2}
+
\sum_{k\ge1}
a_k\,\mu_k(\mu_k-1)\,
r^{\mu_k-2}\cos(\mu_k\theta) = - \frac{1}{2} + o ( 1) 
\\
& \frac{1}{r}u_{r\theta}
-
\frac{1}{r^2}u_\theta
=
\sum_{k\ge1}
- a_k \mu_k^2 r^{\mu_k-2}\sin(\mu_k\theta)
+
\sum_{k\ge1}
a_k \mu_k r^{\mu_k-2}\sin(\mu_k\theta) = O ( 1) 
\\ 
& 
\frac{1}{r^2} u_{\theta\theta}
+
\frac{1}{r} u_r
=
-\frac{1}{2}
+
\sum_{k\ge1}
a_k \mu_k(1-\mu_k)
\, r^{\mu_k-2}
\cos(\mu_k\theta) =  - \frac{1}{2} + o ( 1) \,.
 \end{aligned} 
 $$ 

We infer that $u$ is of class $C ^ {2, \sigma}$  in a neighbourhood of the origin (with $\sigma = \frac{\pi}{ \alpha} -2$),  it has  vanishing gradient 
   ($\nabla u =  o ( 1)$), and is strictly concave  ($\nabla ^ 2 u = - \frac{1}{2} {\rm Id} + o ( 1)$).

 \medskip
Let now $u$ be the mixed ground state of $\Om$. Denoting by $v$ the restriction of $u$  to the angular sector $\Om _0$ introduced above, we have
$$
\begin{cases}
- \Delta v = \lambda ( \Om) v & \text{ in } \Om _0
\\
v = u   & \text{ on } \partial \Om _0 \cap \{ r = R \} 
\\ 
\partial _\nu v = 0 & \text{ on }   \partial \Om _0 \cap \{ \theta \in \{ 0, \alpha \}  \}   \,. 
\end{cases} 
$$  

Working  in  a system of polar coordinates with origin at the (NN)-vertex, by separation of variables 
we find the same angular part as in the previous case,  while the
radial component must now satisfy  the Bessel equation
$$r ^ 2 R'' ( r) + r R' ( r) + (\lambda (\Om)  r ^ 2 - \mu _k ^ 2 ) R ( r) = 0 \,, \qquad \text{ with } \mu _ k = \frac{ k \pi }{\alpha} \,.$$ 

Thus, in such system of coordinates,  $u$ can be written near the origin under the form
$$ 
u ( r, \theta ) = \sum _{ k \geq 0 } a_k J_{\mu _ k} (\sqrt { \lambda (\Om) }  r)  \cos (\mu _ k \theta ) \, , $$
where $J_{\mu _ k}$ is  the Bessel function of order $\mu _k$.   Recalling  the asymptotic expansion  \cite{Leb72} 
$$J _{ \nu } ( r) = r ^ {\nu} \sum _ { m \geq 0} \frac{(-1) ^ m}{2 ^ { 2m  + \nu } \Gamma ( m + {1 }) \Gamma ( m + {\nu} + 1) }  r ^ { 2m}\,, $$

we see that $J_{\mu _ k} (\sqrt { \lambda  (\Om) }  r)  =  r ^ {\mu _ k}  g _{\mu _ k}   ( r ^2)$, 
where $g _{\mu _k}$ is an entire function with all the Taylor coefficients different from zero. 
 In particular, $g_ 0 ( r^2) = 1 - \frac{\lambda (\Om)}{4}  r ^ 2 +o ( r ^ 2)$. 
Hence, 
 \begin{equation}\label{f:expru2}  
u ( r, \theta ) = \sum _{ k \geq 0 } a_k r ^ {\mu _k} g _{\mu _k} ( r ^2)  \cos (\mu _ k \theta )  = a _0 g_0 ( r ^ 2) +  \sum _{ k \geq 1 } a_k r ^ {\mu _k} g _{\mu _k} ( r ^2)  \cos (\mu _ k \theta ) \, .
\end{equation}

We can now argue in analogous way as done above for the mixed torsion function, starting from  the series expansion \eqref{f:expru2} in place of \eqref{f:expru1}: 
we obtain again that   $u$ is of class $C ^ {2, \sigma}$ in a neighbourhood of the origin (with $\sigma = \frac{\pi}{ \alpha} -2$),  
 it has  vanishing gradient 
   ($\nabla u =  o ( 1)  $) and is strictly concave  ($\nabla ^ 2 u = - a_0 \frac{ \lambda (\Om) }{2} {\rm Id} + o ( 1) )\,.$
\qed 
 
\bigskip \bigskip

{\bf Proof of Theorem \ref{t:concavity}}. 
It is not restrictive to prove the result under the following assumptions:  
the 
interior angles  of $\Om$ are strictly smaller than $\frac{\pi}{2}$  at the (NN)-vertex, and equal to $\frac{\pi}{2}$ 
at the (DN)-vertices, and the Dirichlet portion of the boundary is smooth. Indeed, 
the result when the interior angle at the (NN)-vertex is equal to $\frac{\pi}{2}$ is 
immediately obtained
 by  the  reflection  argument  described in the Introduction.  On the other hand, the  result
 when the angles at the (DN)-vertices  take  values in $(0, \frac{\pi}{2})$  and   the Dirichlet arc  is  nonsmooth, can be obtained by an approximation argument; notice that the preservation of the {\it strict} positivity after passing to the limit, follows from  the constant rank theorem   \cite[Theorem 1]{KorLew}     and Lemma \ref{l:expansion}.   (In particular, the applicability of the constant-rank theorem to our problems will be discussed later during the proof.)

\smallskip 
 Let $\Om _0$ be a circular sector contained into $ \Om$,  having  the same (NN)-vertex $O$ as $\Om$, and 
Neumann sides  lying on the same half-lines containing the Neumann sides of  $\Om$.  
 For $t \in [0, 1]$,  we consider a one-parameter family of   convex curvilinear sectors  $\Om _ t$, which starts from $\Om _0$
 and arrives at   $\Om _ 1 = \Om$, by 
  increasing monotonically under domain inclusion,  and
 varying continuously with respect to the Hausdorff distance. 
 Moreover we construct the family so that, for every $t \in [0, 1]$, 
  the interior angles of $\Om_t$ at its (DN) vertices are equal to $\frac{\pi}{2}$, and the Dirichlet portion of $\partial \Om _ t$ is smooth.

Now, for every $t\in [0, 1]$, we construct a  convex curvilinear sector   $\widetilde \Om_t$ slightly larger than $\Om _t$, 
and an extension to $\widetilde \Om_t$ of the mixed torsion function or mixed ground state of  $\Om _t$. 
To that aim, let us denote by $S'_t$ and $S''_t$ the Neumann sides of $\partial \Om _t$, and by $\Gamma _ t$ its Dirichlet portion. 
 Then we consider  
a    convex curvilinear sector   $U'_t$ of the following kind:
\begin{itemize}
\item[--] 
 $U'_t$  is contained into $\Om_t$; 
\smallskip

 \item[--] $U'_t$ has   the same (NN)  vertex as $\Om_t$; 
\smallskip

 \item[--] 
 $\partial U'_t$ contains both $S'_t$ and 
a small portion of $\Gamma _ t$,  obtained as the intersection between   $\Gamma _ t$ and a neighbourhood of the 
(DN)-vertex $\Gamma _ t \cap   \overline S'_t  $ of $\Om _t$.

\end{itemize} 
Let $\mathcal R'_t (U'_t)$ denote the orthogonal reflection of $U'_t$ across $S'_t$. 
We define $U''_t$ and $\mathcal R '' _t (U''_t)$  in the analogous way, replacing $S'_t$ by $S''_t$. 
Then we set 
\begin{equation}\label{f:junction}  
\widetilde \Om_t:=  \overline \Om_t   \cup \mathcal R' _ t (U'_t) \cup \mathcal R'' _t  (U''_t) \,, 
\end{equation}
see Figure \ref{fig:0}.
We shall denote by $\widetilde S'_t$ and $\widetilde S''_t$ the Neumann sides of $\widetilde \Om _t$, and by $\widetilde \Gamma _ t$ its Dirichlet arc
(which contains by construction the Dirichlet arc $\Gamma _ t$ of $\Om _ t$). 
   We point out  that $\widetilde \Gamma _t $ is smooth: in fact, the orthogonal reflections involved in 
the construction  of $\widetilde \Om _t$ according to \eqref{f:junction} do not create boundary singularities along $\widetilde \Gamma _t$, 
  thanks to the  assumption that the inner angles of $\Om_t$ at its (DN) vertices are equal to $\frac{\pi}{2}$.

\begin{figure} [h] 
     \includegraphics[height=4cm]{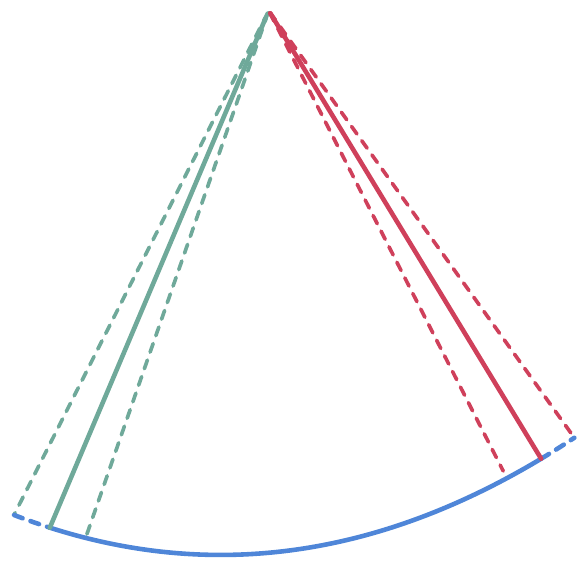}
\caption{The convex  curvilinear   sector  $\Om _ t$  and the enlarged sector $\widetilde \Om _t$ considered in the proof of Theorem \ref{t:concavity}. }
\label{fig:0}   
\end{figure}

 For every $t \in [0, 1]$, we denote by $u _{\Om _t}$ either the mixed torsion function or the mixed ground state of $\Om _t$. 
Then we consider the function $u_t$ defined on the closure of $\widetilde \Om_t$ by
\begin{equation}\label{f:ffunzione} 
u_t:= 
\begin{cases} 
u _{\Om_t} ( ( \mathcal R '  _{t}  ) ^ { -1} ( x))  & \text { if } x \in \overline{ \mathcal R ' _t  (U'_t) } 
\\ \noalign{\smallskip} 
u _{\Om_t}  (x) & \text { if } x \in \overline \Om_t 
\\ \noalign{\smallskip}  
u _{\Om _t} (( \mathcal R '' _ t ) ^ { -1} ( x) )  & \text { if } x \in \overline{ \mathcal R '' _t (U_t '')}\,.$$  
\end{cases}
\end{equation} 
We define on $ \widetilde \Om _t$ the function 
$$v _ t: = \begin{cases}
- u _ t ^ { \frac{1}{2}}  & \text{ when  $u_{\Om_t}$ is the mixed torsion function}
\\  
- \log (u _ t)  & \text{ when  $u_{\Om_t}$ is the mixed ground state}.
 \end{cases} 
 $$
By construction, we have that $v_t$ vanishes on $\widetilde \Gamma _ t$ and  solves
\begin{equation}\label{f:eqvt} 
\Delta v_t  = f ( v _t, \nabla v_t )  \qquad \text{ in }   \widetilde \Om_t\,,
\end{equation} 
being respectively
\begin{equation}\label{f:rhsb} 
f (r, p) = \begin{cases}
- \frac{1}{r} \Big ( \frac{1}{2} + |p| ^ 2  \Big ) & \text{ when  $u_{\Om_t}$ is the mixed torsion function}
\\
\lambda (  \Om_t  ) + |p| ^ 2  & \text{ when  $u_{\Om_t}$ is the mixed ground state}.
\end{cases}  
\end{equation}  
 
Below, we are going to denote  by $H _ t$ the quadratic form  associated with the Hessian matrix $\nabla ^ 2 v _ t$; moreover, 
by writing $H _ t >0$ and $H _ t \geq 0$, we mean respectively that $H _t$ is positive definite and positive semidefinite.  
To prove the result, we are going to show that  
\begin{equation}\label{f:tesihessian}
H _ 1 > 0  \qquad \text { in }  \Om _ 1\,.  
\end{equation} 

To that aim we consider 
 $$\tau :=  \sup \Big \{ t\in   (0, 1 ] \ :\ H _ t >0 \text{ in } \widetilde \Om _ t \Big \}\,,$$ 
 and we prove that necessarily $\tau = 1$.   Once proved that $\tau = 1$,  by using the fact that  $v _ t$  converge to $v _ 1$ 
in $C ^ 2 _{loc} (\widetilde \Om _ 1)$ 
as $t \to  1 ^-$, we obtain that $H _ 1 \geq 0$   in $\Om _1$; then, by invoking the constant rank theorem \cite[Theorem 1]{KorLew} combined with Lemma \ref{l:expansion}, we infer that, in fact, the strict inequality  \eqref{f:tesihessian} holds.  
 We point out that we are in a position to apply the constant rank theorem since,  for any of the two functions $f$ defined in  \eqref{f:rhsb}, we have that   $r \mapsto \frac{1}{f (r , p)}$ is convex: in case of  torsion, such map is affine, while in case of the eigenfunction it is independent of $r$.

 We observe first of all that, from the explicit computation of $u _0$  on a circular sector, it holds
$H _ 0 >0$ in $\widetilde \Om _0$. This ensures that $\tau >0$. 
 Assume by contradiction that $\tau < 1$.  
    Then, from the definition of $\tau$, we have that
  \begin{equation}\label{f:convex} 
  H _ \tau \geq 0 \qquad \text{ in } \widetilde \Om _\tau\,,
  \end{equation} 
  but
  \begin{equation}\label{f:rango1} 
  H _ \tau (x^*) >0 \ \text { is {\it  violated}  for some point }  x^*  \text{ in the closure of } \widetilde \Om _ \tau\,.
  \end{equation} 
 
 We claim that the simultaneous validity of \eqref{f:convex} and \eqref{f:rango1} is not possible. 
 
Let us first examine the case when the point $x ^*$ lies in  $\widetilde \Om _\tau $ (i.e., not on its boundary). 
In this case we observe that,  by \eqref{f:convex}, the function $v _\tau$ is a {\it convex} solution to the PDE in  \eqref{f:eqvt}.    Hence,  we know from 
\cite[Theorem 1]{KorLew} that  the rank of the matrix $\nabla ^ 2 v _\tau$ is constant in $\widetilde \Om _\tau$.  Then, since  $x ^*$  lies in  $\widetilde \Om _\tau $, we infer that  $\nabla ^ 2 v _\tau$ cannot have maximal rank at {\it any} point of $\widetilde \Om _ \tau$. 
But this contradicts the fact that, 
    if $\overline x$  is a point of $\widetilde \Gamma _ \tau $ (different from a (DN) vertex of $\widetilde \Om _\tau$), 
by the same local  argument adopted in the proof of \cite[Lemma 2.4]{K83}
we have 
 \begin{equation}\label{f:rango2}  H _ \tau  (\overline x ) >0 \qquad \text{ in a neighbourhood of } \overline x\,.
 \end{equation} 
 It remains to exclude the possibility that 
the point $x^*$  lies on the boundary of $\widetilde\Om _\tau$. 
Let us show that this cannot occur, by decomposing  such boundary as
$$\partial \widetilde\Om _\tau =  ( \widetilde S_\tau ' \cup   \widetilde S_\tau '' \ )  \cup \widetilde \Gamma _ \tau   \cup O \,.$$     
 \underbar{Case 1.}  $x ^*\in  \widetilde S_\tau ' \cup   \widetilde S_\tau '' $.  
  
  In this case, by reflecting $x^*$ across $S'_\tau$ or across $S ''_\tau$, we find another point $x ^{**}$, lying in the interior of $\widetilde \Om _ \tau $,    such that  the condition $H _ \tau  (x^{**}) >0$ is violated. 
As shown above, this contradicts  the constant rank theorem.

 \smallskip

  \underbar{Case 2.} $x ^*   \in \widetilde \Gamma _ \tau $.
  
If $x^*$ is not a (DN) vertex of $\widetilde \Om _\tau$,  we have a contradiction,  because at any point $\overline x \in \widetilde \Gamma _ \tau $ 
different from its (DN) vertices, \eqref{f:rango2}  holds.  If $x^*$ is a (DN) vertex of $\widetilde \Om _\tau$,   we can reduce ourselves back to the previous situation: indeed, 
by reflecting $x^*$ across $S' _\tau$ or across $S '' _\tau$, we find another point $x ^{**}$, 
which belongs to $\widetilde \Gamma _\tau $ and is not a (DN) vertex,   
 such that  the condition $H _ \tau  (x^{**}) >0$ is violated.

   \smallskip
  \underbar{Case 3.}   $x ^* = O$.  
  
   This cannot occur since we know from Lemma \ref{l:expansion} that  $H _\tau >0$   in a neighbourhood of $O$. 
  \qed
 
\bigskip

\medskip \medskip
{\bf Proof of Corollary \ref{c:hotspot}.}   Just for definiteness, let us deal with the mixed torsion function of $\Om$  (the proof of   the  statement for the logarithm of the mixed ground state  being completely analogous).

Since $O$ is a local maximum by Lemma \ref{l:expansion}, and  the strict concavity of $u ^ {\frac 1 2}$ shown in Theorem \ref{t:concavity} allows at most one local maximum, $O$ is the unique global maximum.

Statement (ii) readily follows from the fact that, by Theorem \ref{t:concavity} and property (i) already proved, the super-level sets of $u$ are nested convex 
 curvilinear sectors   with vertex at $O$.  

 To prove statement (iii), we observe that  the derivative of the function $u ^ {\frac 1 2}$ along the segment $[x, y]$,   oriented from $x$ to $y$,  is strictly decreasing by Theorem \ref{t:concavity} 
and vanishes at $y$ by the Neumann condition satisfied at $y$. Hence it is  positive    along the segment.  
\qed

\bigskip
 We now turn to the proof of Corollary \ref{c:billiardconcavity}. 
To this end, we first make precise the definition of  billiard trajectory and introduce a further definition for later use. 

\medskip
\begin{definition}\label{d:traj} 
A  {\it billiard trajectory} in  a  convex curvilinear sector  $\Om$ with    Neumann boundary $\Gamma _N = S' \cup S''$ and   (NN) vertex at $O$, 
is a 
continuous curve $\gamma: [0, T] \to \overline \Om$  parametrized by arc length  for which there exists a partition
$0 = t_0 < t_1 < \cdots < t_k = T$ such that:

\smallskip
-- for every $ i = 1, \dots, k$, the restriction of $\gamma$ to $(t_{i-1}, t_i)$ is a line segment contained in ${\Omega}$; 

\smallskip
-- for every $ i = 1, \dots, k-1$, the point $\gamma ( t_i)$ belongs to $\Gamma _N   \cup  O   $,
and the
 two consecutive segments meeting at  $\gamma(t_i)$ obey the reflection law, namely:
if   $\gamma ( t_i)  \in \Gamma _N  $,   the two segments 
   form equal angles with $\Gamma _N$; 
    if  $\gamma(t_i)= O$,   then $i= 1$, $k = 2$   and,  denoting by 
     $\alpha$  
 the interior angle at the (NN)-vertex, and by $\theta$ 
  the  angle  between $S'$ and $[O, \gamma(t_{0})]$, it holds that
  the angle formed by  $[O,\gamma(t_2)]$ with the side $S'$, if
$\left\lfloor \frac{\pi}{\alpha} \right\rfloor$
is odd, or  with the side $S''$, if
$\left\lfloor \frac{\pi}{\alpha} \right\rfloor$
is even, is equal to
$\left| \pi - \left(\left\lfloor \frac{\pi}{\alpha} \right\rfloor + 1\right)\alpha
+ \theta \right|$. 
      \end{definition}

\medskip

\begin{definition}\label{d:unfolding}
Let $\Om$ be a  convex curvilinear sector  with (NN) vertex at $O$, and let $v$ be a function defined on $\overline \Om$.  
 We denote by  $\mathcal R_+ (\overline \Om) $ and $\mathcal R_- (\overline \Om)$ 
the reflections of $\overline \Om$  across one of the two half-lines starting at $O$  which support the Neumann sides of 
  $\Om$: by convention, 
the sign $+$ is chosen for the reflection in counterclockwise sense.   

For any positive natural number $n$,  we call  {\it positive/negative $n$-unfolding of $\overline \Om$}  the set composed by $n$ congruent copies of $\overline \Om$ defined by 
$$  \mathcal U _{\pm} ^n  (\overline \Om):=   \bigcup _ {j = 0} ^ {n-1} (  \mathcal R  _\pm ) ^j   (\overline \Om) \,, \qquad \text{ where } (  \mathcal R  _\pm ) ^j := \underbrace {\mathcal R \circ \dots \circ \mathcal R}_{  j   \ \rm{times}}\,. $$ 

 Notice for later use that,  when $j$ runs from $0$ to $n -1$, 
  the interior parts of the sets $(  \mathcal R  _\pm ) ^j   (\overline \Om) $ 
may have a nonempty intersection, and this occurs  if and only if $n \alpha >2\pi$   (see Figure \ref{fig:3}).       
Assuming that $n \alpha \leq 2 \pi$,  we define  on $\mathcal U _{\pm}   ^ n (\overline \Om)$ the {\it positive/negative $n$-unfolding of $v$}  by   
$$  \mathcal U_{\pm}   ^ n (v): =   v ((\mathcal R  _{\pm}^ { j} )  ^ {-1}  ( x))    \qquad    \text{ if } 
x \in   (  \mathcal R  _\pm ) ^j ( \overline \Om )     \,.$$ 
\end{definition}

 \bigskip 
{\bf Proof of Corollary \ref{c:billiardconcavity}.}  Just to fix the ideas we prove the statement  for $v := u ^ {\frac 1 2}$, being $u$ the mixed torsion function of $\Om$. 
We let $n = n _\alpha \in \N$ be the unique integer  such that  
$$
 \alpha\in (\frac \pi{n-1}, \frac{\pi}{n-2}]
$$
(since $\alpha \leq \frac{\pi}{2}$, we have $n \ge 4$), and we  consider the positive/negative $n$-unfoldings  $\mathcal U _{\pm}  ^n  (\overline \Om)$  according to Definition \ref{d:unfolding}.   Notice that,  since $(n-2) \alpha \leq {\pi}$,  we have that $n \alpha \le 2 \pi$, so that   no overlap  occurs  among the interior parts of the sets $(  \mathcal R  _\pm ) ^j   (\overline \Om) $ for $j = 0, \dots , n-1$. 
  On the other hand, thanks to the inequality $(n-1) \alpha > 
 \pi$,  by unfolding 
the family of all billiard trajectories in $\Om$ we find segments $[x,y]$ 
which satisfy one of the following inclusions: 
$$\text{ either } [x,y ]\sq   {\rm int} (  \mathcal U _{+} ^  n  (\overline \Om) )  \cup O, \qquad \text{ or } [x,y ]\sq  
  {\rm int}  ( 
\mathcal U _{- } ^ n  (\overline \Om))    \cup O    \,.$$ 

Thanks to this unfolding procedure,  in order to obtain the strict billiard-concavity of $v$ in $\Om$,  it is enough to show that the Hessian matrices of 
the functions 
$\mathcal R_{\pm, n} (v) $  are positive definite  in the interior parts of    $  \mathcal U _{\pm}  ^ n  (\overline \Om)$   and at $O$.   
To that aim, it is enough to observe that  such Hessian matrices are positive definite in   $(  \mathcal R  _\pm ) ^j   (\overline \Om)$ 
for every $j= 0, \dots, n-1$ (this follows from 
Theorem \ref{t:concavity} applied to any of  the convex   curvilinear sectors  
$(  \mathcal R  _\pm ) ^j   (\overline \Om)$), and  that the unfolded functions 
 $\mathcal U_{\pm} ^ n (v) $  
 are of class $C ^ 2$ 
across the Neumann boundary of $(  \mathcal R  _\pm ) ^j  (\overline \Om)$  and at $O$ 
(respectively
by the  homogeneous Neumann condition satisfied by $v$ on $\Gamma _N$ and by Lemma \ref{l:expansion}).  \EEE 
 \qed

\section {Proof of Theorem \ref{t:BM}} \label{sec:reflections}

\subsection{Proof of Brunn--Minkowski inequality \eqref{f:BMt} for the mixed torsional rigidity}

Similarly as done in the proof of Theorem \ref{t:concavity}, 
it is not restrictive to prove the result under the assumptions  that ${\pi}/{\alpha}$ is not an integer,   
that $\Om _0$, $\Om _1$ have interior angles  at their (DN)-vertices equal to ${\pi}/{2}$, and that the Dirichlet portion of their boundary is  
of class $C ^ 2$ and has strictly positive curvature.  
  Notice that, since these additional conditions are closed under Minkowski addition, they will be satisfied also by $\Om _t$.  

\smallskip 
The inequality \eqref{f:BMt} is trivially true if $t = 0$ or $t = 1$. So we let $t \in (0, 1)$, which will remain fixed throughout the proof. 
 We are going to show that,  denoting by 
 $u _{\Om_t}$  the mixed torsion function of $\Om _ t$, it holds
 \begin{equation}\label{f:Kenn} 
u _{\Om_t} ^ { 1 /2} (  ( 1 -t ) x + t y)  - ( 1- t)  u _{\Om_0} ^ { 1 /2} ( x)  - t u _{\Om_1} ^ { 1/ 2} ( y ) \geq 0 
\qquad \forall x \in \Om _0, \ y \in \Om _1\,.
\end{equation} 

Once proved this inequality, the result readily follows by arguing as in the proof of \cite[Theorem 11]{C96}. 
Namely,  after extending to zero $u _{\Om_t} $ outside $\Om _t$,  the arithmetic-geometric inequality gives
$$u _{\Om_t} (  ( 1 -t ) x + t y)  \geq   u _{\Om_0} ( x)^ { 1- t}    u _{\Om_1} ( y ) ^ t  \qquad \forall x ,  y \in \R^2 \,.$$
Then  \eqref{f:BMt}  follows by applying  Pr\'ekopa-Leindler inequality \cite{PL1, PL2}. 

We observe that inequality \eqref{f:Kenn} can be rephrased as
\begin{equation}\label{f:goall} \C  (x, y)\geq 0 \qquad \forall x \in \Om _0, \ y \in \Om _1\,,  \end{equation} 
where
  $\C $ denotes  
 the generalized Korevaar-type concavity function:
   \begin{equation} \label{f:gencol} 
   \C  (x, y):=   u ^ { 1/2} _{\Om_t} (  ( 1 -t ) x + t y)  - ( 1- t)   u  ^ {1/2} _{\Om_0} ( x)  - t  u  ^ { 1/2} _{\Om_1}  ( y) \qquad \forall x \in \Om _0, \ y \in \Om _1\,.
   \end{equation}

 In order to prove \eqref{f:goall}, we are going to solve a minimization problem for a suitable ``unfolding'' of the concavity function $\C$. 
The formulation of such problem requires some preliminaries. 
Just to simplify the notation, in the remaining of the proof we set 
$$K_0:= \overline \Om _0\, , \qquad K _1:= \overline \Om _1, \qquad K _t:=  \overline \Om _t\,, $$
and 
$$v_0:= u ^ { 1/2}_{\Om _0}\, , \qquad v_1:= u ^ { 1/2}_{\Om _1}\, , \qquad v_t:= u ^ { 1/2}_{\Om _t} \,.$$ 
We choose 
 $n= n_ \alpha \in \N$  as in the proof of Corollary \ref{c:billiardconcavity},  that is as the unique integer such that 
\begin{equation}\label {f:choicen}
  \alpha\in \big (\frac \pi{n-1}, \frac{\pi}{n-2} \big ]\,.
\end{equation} 
Recall that, since $\alpha \in (0,  \frac{ \pi}{2})$, we have $n \geq 4$.  

For such $n$, we are going to consider the 
$(n+1)$-positive unfoldings   of the  sets $K_0$, $K_1$ and $K_t$, and of the functions $v_0, v_1, v_t$ defined on them.
  The reason why we choose  $(n+1)$  as the number of reflections will be explained later during the proof.  
Since we are going to work only with {\it positive} unfoldings,  when we adopt the notation introduced in Definition \ref{d:unfolding}, for simplicity 
we shall omit the symbol $+$. 
For reasons which will be clear below, we deal separately with the cases $n \geq 5$ and $n = 4$.

\medskip 
$\bullet$ 
Let $n \geq 5$, which corresponds to an angle $\alpha \in (0,  \frac{\pi}{3}]$.  Since the upper inequality in  \eqref{f:choicen}   gives $ ( n + 1 ) \alpha \leq   \frac{n+1}{n-2}  \pi$ ,  we have 
$(n +1 ) \alpha \leq 2 \pi$.  Hence, in the definition of the $(n+1)$-unfolding of $K _0$ given according to Definition \ref{d:unfolding}, namely  
\begin{equation}\label{f:disjoint} 
\mathcal U ^{n + 1} ( K _0):= \bigcup _ {j = 0} ^ {n} (  \mathcal R ) ^j   (K_0) \,, \qquad \text{ where } (  \mathcal R ) ^j:= \underbrace {\mathcal R \circ \dots \circ \mathcal R}_{j \ \rm{times}} \,,
\end{equation} 
the sets  $\mathcal R ^ j  ( K _0)$, for $j = 1, \dots, n$, 
   have pairwise disjoint interiors. 
   
 Notice also that, since the lower   bound for $\alpha$    in \eqref{f:choicen}
ensures that $(n+1) \alpha >  \pi$,  the set $\mathcal U ^{n + 1} ( K _0)$ turns out to be non convex with an interior concave corner at $O$
  (except for the limit case $n = 5$, $\alpha = \frac{\pi}{3}$), see the example in Figure  \ref{fig:2}.    
 \begin{figure}[ht]
 \center
 \includegraphics[width=4.3cm]{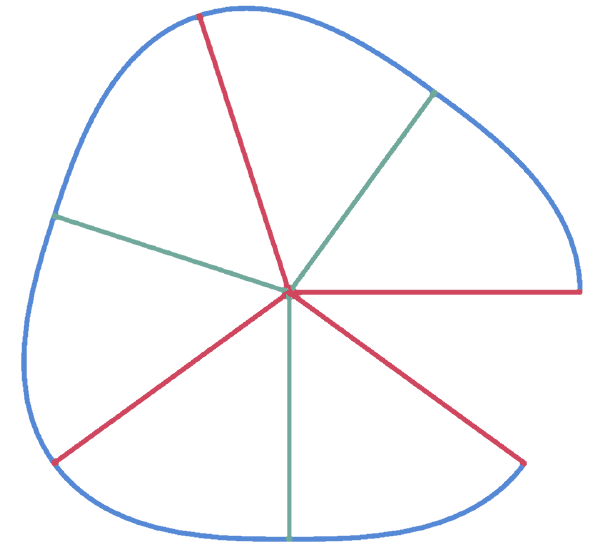}
 \caption{The $6$-unfolding of a convex curvilinear sector with  interior (NN)--angle 
  $\alpha = \frac{3\pi}{10} \in (\frac{\pi}{4}, \frac {\pi}{3})$ }  
 \label{fig:2}   
 \end{figure}

Thanks to the fact that the sets  $\mathcal R ^ j  ( K _0)$, for $j =0, 1, \dots, n$, 
   have pairwise disjoint interiors,  we can proceed by defining the positive $(n+1)$-unfolding of $v_{0}$  on $\mathcal U ^{n + 1} ( K _0)$   according to 
 Definition \ref{d:unfolding}, namely  
\begin{equation}\label{f:disjoint1} 
\mathcal U ^ {n+1} (v_0): =  v _0  ((\mathcal R  ^ { j} )  ^ {-1}  ( x))    \qquad    \text{ if } 
x \in     \mathcal R   ^j ( K_0 )     \,.
\end{equation}   
  
  In the same way, we define  $\mathcal U ^ {n+1} (v_1) $ on $\mathcal U^ {n+1} (K_1)$ and  $\mathcal U^{n+1} (v_t) $  on $\mathcal U ^ {n+1} (K_t)$. 
  
  \smallskip 
  Since we shall have to compute $\mathcal U^{n+1} (v_t) $   on Minkowski combinations in proportion $t$ of points in  
  $\mathcal U ^{n+1} (K_0)$ and $\mathcal U ^ {n+1} (K_1)$, we need to establish some sufficient conditions in order that these combinations belong to  
  $\mathcal U ^ {n+1} (K_t)$.  To that aim, we prove Claim I. below. Notice carefully that    our convex curvilinear sectors $K_0, K _1 \in \mathcal K _\alpha$  
satisfy  the assumption 
 \eqref{f:ipalfa} below,  by taking therein
$\omega=\alpha$ and  $h$ equal to the integer $n = n _\alpha$ chosen as in \eqref{f:choicen},  only provided 
 $\alpha \in (0, \frac{\pi}{3}]$,  which corresponds to   
 $n \geq 5$.  Indeed, for $n$ odd   we have 
$  \frac{\pi}{n-2} \leq   \frac {2 \pi } {n +1}$ provided   $n \geq 5$, while 
for $n$ even   we have $  \frac{\pi}{n-2} \leq   \frac {2 \pi } {n +2}$  provided  $n \geq 6$.

\medskip 

\underbar{Claim I.}: {\it   Let 
$K_0, K_1$ be convex curvilinear sectors with interior angles at their (DN) vertices equal to $\frac{\pi}{2}$, and interior angle $\omega >0$ at their  (NN) vertex.     
For every $h \in \N$, $h \geq 1$,  such that  
 \begin{equation}\label{f:ipalfa} 
(h+1)   \omega  \in (0,  2\pi] \text{ if $h$ is odd, }  \qquad   (h+2)  \omega    \in (0, 2\pi] \text{  if $h$ is even, }
\end{equation} 
letting 
$$\begin{aligned} 
\mathcal A^{h+1} (K_0, K_1):= \Big \{
& (x, y) \in \mathcal U ^ {h + 1} ( K _0) \times \mathcal U ^{h + 1} ( K _1) \ :\ 
\text{either }  xy = 0 , \text{ or }
\\  
 & x=\rho_x e^{i \theta_x} \, , y=\rho_y e^{i \theta_y},  \text{  for  } \theta _x, \theta_y \in \R  \text{ with }  
 \theta_x \vee \theta_y- \theta_x \wedge \theta_y\le \pi \Big \} \,,
 \end{aligned}
 $$  
it holds  that 
\begin{equation} \label{f:claim} 
(x, y) \in \mathcal A^{h+1} (K_0, K_1)\, , \ t \in (0, 1) \ \Rightarrow \ 
( 1-t) x + t y\in \mathcal U ^{h + 1} ( K _t)\,. 
\end{equation} }

\underbar{Proof of Claim I.} We argue by induction on $h$, starting by $h=1$ up to its maximal admissible value. Let us  start by proving the claim for  $h= 1$, with $  \omega  \in (0, \pi]$. 
For 
$  \omega =\pi$  the conclusion is immediate: indeed
in this case the sets $\mathcal U ^ 2 ( K _0)$, $\mathcal U ^ 2 ( K _1)$, $\mathcal U ^ 2 ( K _t)$  are convex, 
and we are reduced back to a classical Minkowski combination since 
$$ \begin{aligned} \mathcal U ^ 2 ( K _ t) & =  [(1-t)K_0+t K_1]\cup  \mathcal R [(1-t) K_0+t K_1] \\ & = 
(1-t)[K _0\cup  \mathcal R (K_0)] + t[K_1\cup  \mathcal  R (K_1)]  
 = 
( 1-t) \mathcal U ^ 2 ( K _0) + t  \mathcal U ^ 2 ( K _ 1). 
\end{aligned} 
$$  
Let $ \omega \in (0, \pi)$.  If  $(x,y) \in K _0 \times K _ 1$ or $(x,y) \in \mathcal R( K _0 ) \times \mathcal R( K _ 1)$, 
\eqref{f:claim} holds because we have respectively  $z _t \in K _t $ or $z _t \in \mathcal R ( K _ t)$. 
Thus, up to changing labels,  we can assume without loss of generality that
$$x \in K _0 \qquad  \text{ and } \qquad  y \in \mathcal R ( K _1)\, , $$

\smallskip
We observe firstly that, if  $X _{ \omega   }$ is the infinite cone generated by the half-lines supporting the Neumann sides of $K _0$, $K _1$, 
denoting by  $\mathcal U ^ {2} (X _{  \omega   })= X_{  \omega  } \cup \mathcal R (X_{ \omega   }) $ its $2$-positive unfolding,   since $(x, y) \in \mathcal A ^ 2 ( K_0, K _ 1)$, 
there holds
$$z_ t:= (1-t)x +t y  \in \mathcal U ^ 2 ( X _{ \omega })\,.$$ 

Let $x':= \mathcal R(x)\in \mathcal R (K_0)$ and $y':= (\mathcal R)^{-1}(y)\in K_0$. Then, 
$$\begin{aligned}
&z_t ' := (1-t)x '+t y \in \mathcal R (K_t) \subset \mathcal U ^ 2 ( K _ t) 
\\
& z_t'':= (1-t)x +t y' \in K_t  \subset \mathcal U ^ 2 ( K _ t)  \,,
\end{aligned} 
$$ 
 Since $z_t \in [z_t',z_t'']$, the conclusion that
$ z_t \in \mathcal U^2(K_t)$   (see Figure \ref{fig:4}, left)   follows from the following observation,
applied with   $K$ equal to $K _t$ or  $\mathcal R ( K _t)$, respectively in case $z_t$ belongs  to $X _\omega$ or  $\mathcal R ( X_\omega)$. 

 Let $K$ be the closure of a convex curvilinear sector with interior
angle $\omega\in(0,\pi)$ at its (NN) vertex and interior angles not
exceeding $\pi/2$ at its (DN) vertices. Let
${\rm cone}(S',S'')$ be the infinite cone generated by the Neumann
sides $S'$ and $S''$ of $K$. Then, for every $z\in K$, if
$\overline z$ denotes the orthogonal projection of $z$ onto either
of the lines supporting $S'$ or $S''$, 
due  to the convexity of $K$ and the assumption that
the (DN) angles are not larger than $\pi/2$, it holds that $
 [z,\overline z]\cap {\rm cone}(S',S'') \subset K$    (see Figure \ref{fig:4}, right). 
  
   \begin{figure}[ht]
 \center
 \includegraphics[width=10cm]{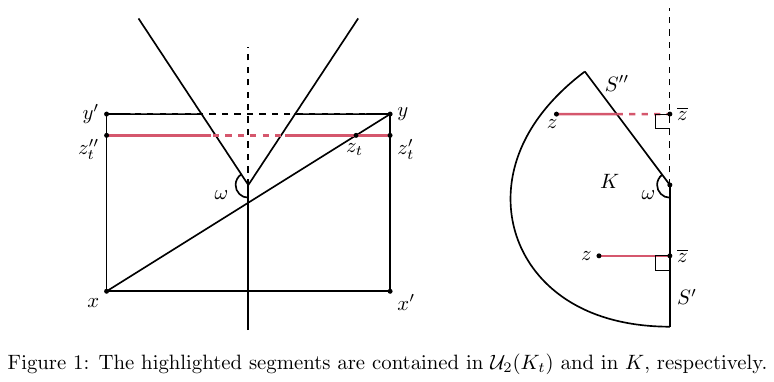}
  \caption{ The inclusions  $ z_t \in \mathcal U^2(K_t)$  and  $[z,\overline z]\cap {\rm cone}(S',S'') \subset K$ in the proof of Claim I.}  
 \label{fig:4}   
 \end{figure}

  \smallskip
 Assume now that  the claim is true up to $h -1$, and let us prove it is true for $h$. 
 By inductive assumption, up to changing labels, the unique case we have to deal with is when
\begin{equation}\label{f:unique}  
x \in K_0 \qquad \text{  and  } \qquad y \in \mathcal R^h( K _ 1)\,.
\end{equation} 

Assume first that $h$ is odd. 
In this case,  
we are  reduced back to the two cells case $  h    = 1$ already discussed, simply by replacing $K _0$ and $K _1$ by their
$(\frac{h+1}{2})$-unfoldings
 $ \mathcal U ^{\frac{h+1}{2} } ( K _0)$ and $\mathcal U ^{\frac{h+1}{2} } ( K _1)$.   
  Notice that we are in a position to apply the claim in the two cells cases, 
  because of the assumption that the first condition in \eqref{f:ipalfa} holds.    
 
 If $h$ is even, we can formally add a new reflection, and work with the $(  \frac{h+2}{2}  )$-unfoldings $\mathcal U ^{\frac{h+2}{2}} ( K _0)$ and  
 $\mathcal U ^{\frac{h+2}{2} } ( K _1)$, so that we can again reduce the problem back to the two cell case. 
This can be done thanks to the assumption that the second  condition in \eqref{f:ipalfa} holds,
and the proof of  Claim I. is achieved.

\medskip 
$\bullet$ 
Let now $n  =4$, which corresponds to an angle $\alpha\in ( \frac{\pi}{3}, \frac{\pi}{2}]$.   Then:  

\smallskip

--  If $\alpha   \in ( \frac{\pi}{3} , \frac{2 \pi}{5}]$, the situation is similar to the case $n \geq 5$, i.e.,   the sets $\mathcal R ^ j  ( K _0)$ have pairwise disjoint interiors  for 
$j  = 0, ... ,  4$, so that $\mathcal U ^5 ( K_0)$  and $\mathcal U ^ 5 ( v_0)$  could be still  defined as in \eqref{f:disjoint} and \eqref{f:disjoint1} respectively. 
Again, the set  $\mathcal U ^5 ( K_0)$ is non convex,  except for the limit case  $\alpha =  \frac{2 \pi}{5}$. 

\smallskip
-- If  $\alpha   \in ( \frac{2 \pi}{5}, \frac{\pi}{2}]$, the sets $\mathcal R ^ j  ( K _0)$ still have pairwise disjoint interiors  for $j  = 0, ...,  3$ (since $4 \alpha \leq 2 \pi$), but the interiors of $K_0$ and $\mathcal R ^ 4 ( K_0)$ overlap,  
 see  the example in Figure  \ref{fig:3},   so that    more precisions   are needed to properly define  $v_0$ on  $\mathcal U ^5 ( K_0)$.   

  \begin{figure}[ht]
 \center
 \includegraphics[width=4.3cm]{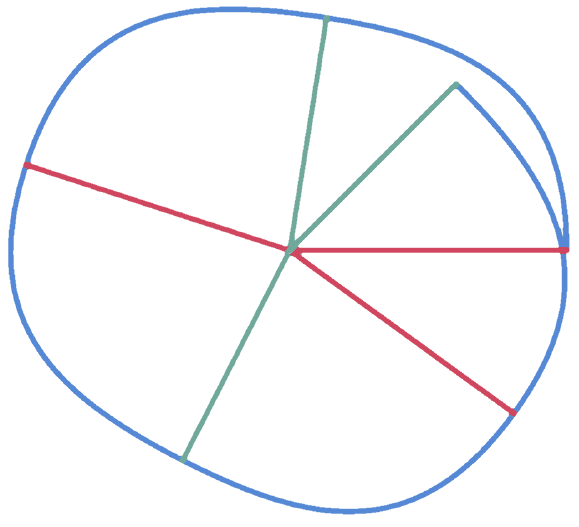}
  \caption{The $5$-unfolding of a convex curvilinear sector with interior (NN)--angle 
  $\alpha = \frac{9\pi}{20} \in (\frac{\pi}{3}, \frac {\pi}{2})$ }  
 \label{fig:3}   
 \end{figure}

  Therefore,  we are led to adopt an ad hoc representation of the
unfolding $\mathcal U ^5 ( K_0)$:  instead of viewing it as an ordinary subset of $\R^2$, 
we  provide a  cut-open angular representation of it, whose purpose 
is to bring us back to the situation in which the   sets
   $ \mathcal R ^j(K_0 )$ 
have pairwise disjoint interior for $j  = 0, ...,   4$. 
 We proceed in this way for every angle $\alpha$ corresponding to $n=4$, namely
$\alpha\in \left(\frac{\pi}{3},\frac{\pi}{2}\right]$, although, as highlighted above,
the procedure is actually needed only for
$\alpha\in \left(\frac{2\pi}{5},\frac{\pi}{2}\right]$.  
   
  In the    complex  plane with origin at $O$, through the identification of the point $\rho e ^ { i \theta}$ with the pair $(\theta, \rho)$,  
 we view $K _0$, for some function $\rho _{K _0}:[0, \alpha] \to \R ^+$,  as 
  $$K_0= \Big \{  (\theta, \rho) \ : \ \theta \in [0, \alpha],\ 0 \leq  \rho \leq    \rho_{K _0}(\theta)] \Big \}\,. $$

By abuse of notation, we denote by $\mathcal R ^ j ( \theta, \rho)$  the $j$-th reflection of the pair $(\theta, \rho)$, meant as
\begin{equation}\label{f:jmap} 
\mathcal R ^ j ( \theta, \rho):= (\theta _j, \rho)\,, \qquad \text{ with }  \theta _j:= \begin{cases}  
(\theta-j \alpha)  & \mbox{ if $j$ is even}
 \\ 
((j+1)\alpha-\theta)  & \mbox{ if $j$ is odd}.
 \end{cases} 
\end{equation} 
  
  Consequently, for every  $j \ge 0$, we view  $\mathcal R  ^j(K_0 )$ as  
 \begin{equation}\label{f:ei} 
 \mathcal R ^j(K_0 )=\Big  \{(\theta , \rho) \  : \  \theta \in [j \alpha , (j+1) \alpha] ,\  0\leq  \rho  \leq  \rho_{K_0}(\theta _j) \Big \}\,.
 \end{equation} 
Once we intend the reflection operator $\mathcal R ^ j$ as in \eqref{f:jmap} and the sets  $ \mathcal R ^j(K_0 ) $ as in \eqref{f:ei}, 
we  can define again $\mathcal U ^ 5 (K_0)$ as in \eqref{f:disjoint} and $\mathcal U ^ 5 (v_0)$  as in \eqref{f:disjoint1};  the latter definition  is now  well-posed, because the sets $ \mathcal R ^j(K_0 ) $, meant as in \eqref{f:ei}, have pairwise disjoint interiors for $n = 0, \dots, 4$, for any $\alpha\in ( \frac{\pi}{3}, \frac{\pi}{2}]$. 
 
\smallskip 

  In the same way, we define  $\mathcal U^{5} (v_1) $ on $\mathcal U^{5} (K_1)$ and  $\mathcal U^{5} (v_t) $  on $\mathcal U^{5} (K_t)$. 

\smallskip 

  Now similarly as done for $n \geq 4$, we provide an analogue sufficient condition in order that the Minkowski combination  in proportion $t$ of points in 
   $\mathcal U^{5} (K_0)$ and $\mathcal U^{5} (K_1)$ lies in $\mathcal U^{5} (K_t)$.   We prove:  
 
  \smallskip 
\underbar{Claim II.}: {\it  For $\alpha \in ( \frac{\pi}{3}, \frac{\pi}{2}]$, and $K_0, K_1 \in \mathcal K _\alpha$ as above,     
letting 
$$\begin{aligned} 
\mathcal A^{5} (K_0, K_1):= \Big \{
& \big ( (\theta_x, \rho_x) ,  (\theta_ y, \rho _y) \big ) \in \mathcal U ^{5} ( K _0) \times \mathcal U ^{5} ( K _1) \ :\ 
\text{either }  \rho_ x \rho _y = 0 , 
\\  
 &   
 \text{ or } \theta_x \vee \theta_y- \theta_x \wedge \theta_y\le \pi \Big \} \,,
 \end{aligned}
 $$  
it holds  that 
\begin{equation} \label{f:claim2} 
\big ( (\theta_x, \rho_x) ,  (\theta _y, \rho _y)\big )  \in \mathcal A^5 (K_0, K_1)\, , \ t \in (0, 1) \ \Rightarrow \ 
\big (\theta _ {( 1-t) x + t y}, \rho _ {( 1-t) x + t y} \big ) \in \mathcal U ^{5} ( K _t)\,. 
\end{equation} }

 \smallskip 
\underbar{Proof of Claim II.}: 
Up to changing labels, the unique case we have to discuss is when
\begin{equation}\label{f:unique1}  
(\theta _x, \rho _x)  \in K_0 \qquad \text{  and  } \qquad (\theta _y, \rho _y)   \in \mathcal R^4( K _ 1)\,.
\end{equation} 
Indeed, in all the other cases, we are reduced to
deal with a family  of  just $4$ reflections of $K_0$ and $K _1$. 
Since $4 \alpha \leq 2 \pi$, in such a situation
we can see elements of 
 $\mathcal R ^ j ( K_0)$ and $\mathcal R ^ j ( K_1)$  back   as points of the form $\rho e ^ { i \theta}$, 
rather than pairs $(\theta, \rho)$.  Then we can 
apply Claim I. for $h = 3$ in order to     deduce that the Minkowski sum $(1-t ) x + t y$ belongs to 
$\bigcup _{j=0} ^ 3  \mathcal R ^ j ( K _t)$, and hence   that \eqref{f:claim2} holds. 

Now, assume that \eqref{f:unique1} holds.
 From the assumptions $(\theta _x, \rho _x)  \in K_0  $ and $\big ( (\theta_x, \rho_x) ,  (\theta _y, \rho _y)\big )  \in \mathcal A^5 (K_0, K_1)$, and since $3 \alpha > \pi$, we have that  necessarily 
$(\theta _y, \rho _y) \in \cup _{j=0} ^ 3  \mathcal R ^ j ( K _1)$.   
Then we are again in a position to apply Claim I. for $h = 3$, so that \eqref{f:claim2} holds, and the proof of Claim II. is achieved.

\bigskip
Thanks to the above analysis,  for every angle $\alpha \in (0, \frac{\pi}{2}]$, choosing $n = n _ \alpha$ as in \eqref{f:choicen}, 
the following definition of  $(n+1)$-unfolding 
of the concavity function \eqref{f:gencol} is well-posed 
for any pair  $(x, y) \in \mathcal A^{n+1} (K_0, K_1)$:
  $$\widetilde \C  (x, y):=    \mathcal U^{n+1}(v_t)  (  ( 1 -t ) x + t y)  - ( 1- t)   \mathcal U^{n+1} (v _0) ( x)  - t \,  \mathcal U^{n+1} (v_1 ) ( y)  \,.$$      
Thus we can consider the minimization problem 
$$m :=\min \Big  \{ \widetilde \C (x, y) \ :\ (x, y) \in  \mathcal A^{n+1} (K_0, K_1) \Big \} \,.$$  
    We are going to show that
   $$m = 0\,,$$
     which in particular implies the inequality \eqref{f:goall}, and may be viewed as a stronger, billiard-type version of it.

     \smallskip
  Since we are minimizing a continuous function over a compact set, $m$ is attained at some pair $(\overline x,  \overline y) \in  \mathcal A^{n+1} (K_0, K_1)$. 
In the remaining of the proof, we set for brevity:
$$\mathcal A:= \mathcal A^{n+1} (K_0, K_1)\, , \qquad \mathcal U _0:= \mathcal U^{n+1} (K_0 ), \qquad \mathcal U _1:= \mathcal U^{n+1} (K_1 ),
\qquad \mathcal U _t:= \mathcal U^{n+1} (K_t )\,.$$   
   
\medskip
\underbar {Case 1}: $(\overline x,  \overline y) \in   {\rm int} ( \mathcal A ) = \mathcal A  \setminus \partial \mathcal A $.

\smallskip 
 Since by Claim I. (for $n \geq 5$) and  Claim II. (for $n = 4$) we know that the  linear map $F (x, y) = ( 1-t ) x + t y$ sends $\mathcal A$ into $\mathcal U _t$, 
we also have that  it sends the interior of $\mathcal A$ into  the interior of $\mathcal U _t$. It follows that, if we perturb the point 
$(\overline x,  \overline y) $, the small perturbation undertaken by the Minkowski combination  $(1-t ) \overline x + t \overline y$ leaves it in the interior of $\mathcal U _t$. 
Then,   we   can reproduce the same perturbation argument used in the proof of Theorem 20 in \cite{C96}, in the case when both points are in the interior part of the corresponding convex bodies. 
We limit ourselves to remark that such argument applies because it is purely local, 
and we refer to that paper for the details: the idea is that the inequality $\widetilde \C (  \overline x,   \overline y) \geq 0$ 
is obtained by using the first and second order optimality conditions coming from the minimality at 
$(\overline  x,   \overline y)$ (i.e., the vanishing of the gradient  of $\widetilde \C$ and the semi-positiveness of its Hessian matrix at 
$(  \overline x,  \overline y)$), and the differential equations satisfied  by the unfoldings of $v_0$ and $v_1$.

\medskip
\underbar {Case 2}:  $(\overline x,  \overline y) \in    \partial \mathcal A $.  
 
\smallskip
We observe that $  \partial   \mathcal A$ can be decomposed as 
$$
\begin{aligned}
\partial \mathcal A=  {} 
& \Big\{
(x,y)\in \mathcal A:
x\in\partial \mathcal U_0\setminus O,\
y\in\partial \mathcal U_1\setminus O
\Big\}
\\
&\cup \Big\{
(x,y)\in \mathcal A:
x\in\partial \mathcal U_0\setminus O,\
y\in{\rm int}\, \mathcal U_1\setminus O 
\Big\}
\\
&\cup \Big\{
(x,y)\in \mathcal A:
x\in{\rm int}\, \mathcal U_0\setminus O,\
y\in\partial \mathcal U_1\setminus O 
\Big\}
\\
& \cup (O,O)  \cup 
\big(O\times (\mathcal U_1\setminus O )\big)
\cup
\big((\mathcal U_0\setminus O )\times O \big)
\\
&\cup \Big\{
(x,y)\in{\rm int}\, \mathcal U_0\times{\rm int}\, \mathcal U_1:
x,y\neq O,\ |\theta_x-\theta_y|=\pi \Big\}.
\end{aligned}
$$ 
In turn, $\partial \mathcal U _0$ and $\partial \mathcal U _1$ can be decomposed as 
$$ \partial \mathcal U _0  = \Gamma _ D (\mathcal U _0) \cup \Gamma _N (\mathcal U _0) \cup  O    \qquad \text{ and }  \qquad 
 \partial \mathcal U _1  = \Gamma _ D (\mathcal U _1) \cup \Gamma _N (\mathcal U _1) \cup  O  \,.$$ 
  
Actually, in our analysis below,  we can ignore the  Neumann portion $\Gamma _N (\mathcal U _0)$, $\Gamma _N (\mathcal U _1)$, by considering their points as
if they were in the interior of $\mathcal U _0$, $\mathcal U _1$. 
Indeed, if $m$ is attained at a pair $(\overline x,\overline y)$ such that
$\overline x \in \Gamma_N(\mathcal U_0)\cap\{\theta=0\}$ or 
$\overline y \in \Gamma_N(\mathcal U_1)\cap\{\theta=0\}$,
then it is also attained at the point
$(\mathcal R^2\overline x,\mathcal R^2\overline y)$,
namely at the counterclockwise rotation by angle $2\alpha$ of 
$(\overline x,\overline y)$. Such rotated point belongs to
${\rm int}(\mathcal A)$, since, by \eqref{f:choicen}, we have
$\pi+2\alpha < (n+1)\alpha$.

The same argument applies if
$\overline x \in \Gamma_N(\mathcal U_0)\cap\{\theta=(n+1)\alpha\}$ or 
$\overline y \in \Gamma_N(\mathcal U_1)\cap\{\theta=(n+1)\alpha\}$.
In this case, one replaces $(\overline x,\overline y)$ by
$(\mathcal R_-^2\overline x,\mathcal R_-^2\overline y)$,
namely by the clockwise rotation by angle $2\alpha$ of the pair
$(\overline x,\overline y)$.
This possibility of reducing points on the Neumann boundary to interior
points by means of a double reflection is precisely the reason why, in this
proof, we work with $(n+1)$-unfoldings rather than with the
$n$-unfoldings used in Corollary~\ref{c:billiardconcavity}.

So, in the sequel $\partial \mathcal U _0$ and $\partial \mathcal U _1$ will be tacitly meant respectively as $\Gamma _ D (\mathcal U _0) \cup O $
and  $\Gamma _ D (\mathcal U _0) \cup O $. 

\smallskip 
With this specification, we now start to discuss what happens when $(\overline x, \overline y)$ belongs to each  of the portions of $\partial \mathcal A$ according to the above decomposition. 
 
\medskip
$\bullet$  If $\overline x\in\partial \mathcal U_0\setminus O$ and 
$\overline  y\in\partial \mathcal U_1\setminus O $, we are done because we have $ \widetilde \C (\overline x, \overline y) = 0$. 
 
 \medskip 
 $\bullet$ 
If $\overline  x\in\partial \mathcal U_0\setminus O$ and 
$\overline  y\in{\rm int}\, \mathcal U_1\setminus O$, or if 
$\overline  x\in{\rm int}\, \mathcal U_0\setminus O$ and 
$\overline  y\in\partial \mathcal U_1\setminus O$,  we reach a contradiction by arguing as done in the proof of Theorem 20 in \cite{C96}, 
 in the case when one of the two points is in the interior and the other one is on the boundary of  the corresponding convex bodies. 
 We refer to that paper for the details of the argument which, similarly as Case 1 above, is purely local. The idea is that, by exploiting Hopf boundary point lemma and the homogeneous Dirichlet condition along $\Gamma _D$,
  the first order variation of $\widetilde \C $ under a small perturbation of $(\overline x, \overline y)$ in inward normal direction is $- \infty$, contradicting the minimality of
 $\widetilde \C$ at  $(\overline x,   \overline y)$.

 \medskip 
 $\bullet$ If $(\overline x , \overline y ) = (O, O)$, we trivially have $ \widetilde \C (\overline x, \overline y) = 0$.

 \medskip  $\bullet$ If $ (\overline x , \overline y )   \in   \big( O \times (\mathcal U_1\setminus O)\big)
\cup
\big((\mathcal U_0\setminus O)\times O \big)$, we reach a contradiction by using Corollary \ref{c:hotspot}. 
Indeed assume to fix the ideas that $O= \overline x$  (the case when $O =  \overline y$ being analogous). We consider $x _\e\in [O,   \overline y ]$ such that 
$x _\e \to O$ as $\e \to 0$. We claim that, for $\e$ small enough,  
  $$\widetilde \C ( x _\e,  \overline y,  t) < \widetilde \C ( O,  \overline  y,  t)\,,$$ 
contradicting the minimality of $\widetilde  \C$ at $ ( \overline x ,   \overline  y,  t)$. Indeed, this amounts to show that 
 $$ \begin{array}{ll}
 &  \mathcal U^{n+1}  (v_{ t} )(  ( 1 - t ) x_\e +   t   \overline  y)  - ( 1-   t)  \,  \mathcal U^{n+1}  (v _0)( x_\e)  -   t  \, \mathcal U^{n+1} (v_1) ( \overline    y)  <
 \\ \noalign{\medskip}
 & 
    \mathcal U^{n+1} (v_{  t} ) (  ( 1 -  t ) O  +   t  \overline   y)  - ( 1-   t)  \,   \mathcal U^{n+1} ( v _0 )  ( O)  -   t  \,  \mathcal U^{n+1} (v_1 )(  \overline  y) \,.
    \end{array}
    $$ 
 This inequality is satisfied  because, respectively by Corollary \ref{c:hotspot} (i)   and Corollary \ref{c:hotspot} (ii), applied on 
 $\mathcal R ^ j ( K_0)$ and on $\mathcal R ^ j ( K _t)$ for some $j \in \{ 0, \dots, n\}$,  
  we have that 
 $$  \begin{cases}
 \mathcal U^{n+1}   ( v _0  ) ( x_\e) = \mathcal U^{n+1}   (v _0 ) ( O)   + o ( \e) &  \\
  \noalign{\medskip} 
 \mathcal U^{n+1}   (   v_{  t} ) (  ( 1 -  t ) x_\e +   t   \overline y) ) =\mathcal U^{n+1}   ( v_{  t} ) (  ( 1 -   t ) O  +   t  \overline   y)  + \gamma \e + o ( \e)\,, \quad \text{ with } \gamma <0\,. &
\end{cases} $$

\medskip   $\bullet$ If $ (\overline x , \overline y )   \in {\rm int}\, \mathcal U_0\times{\rm int}\, \mathcal U_1:
\overline x, \overline  y\neq O,\ |\theta_{\overline  x} -\theta_{\overline  y}|=\pi$  
 
 \smallskip 
 
Notice that, in this case, we have that $O\in [\overline  x, \overline  y]$, with $\overline x, \overline  y\neq O$.   Then we reach a contradiction by using
again Corollary \ref{c:hotspot}. 
Assume that $(1-t ) \overline  x + t \overline  y$ lies in  $[O, \overline  y]$ 
 (the case when it lies in $[O, \overline  x]$ being analogous).  We claim that
  $$ \widetilde \C (O, \overline   y, t) <  \widetilde \C (\overline   x, \overline   y, t)\,,$$
 contradicting the minimality of $\widetilde \C$ at  $ ( \overline  x ,  \overline  y, t)$. Indeed, this amounts to show that 
  $$ \begin{aligned}  
 & \mathcal U^{n+1}   (  v_t )(  ( 1 -t )  O  + t   \overline  y)  - ( 1- t)  \,  \mathcal U^{n+1}   (  v _0  )( O)  - t  \,  \mathcal U^{n+1} \,   ( v_1 )( \overline   y)  < 
 \\ \noalign{\medskip} 
&  \mathcal U^{n+1}   ( v_t  ) (  (1-t )\overline   x + t \overline  y  )  - ( 1- t) \,   \mathcal U^{n+1}   (   v _0 ) (\overline    x )  - t  \,  \mathcal U^{n+1} \,   ( v_1)  (  \overline  y) \,.
 \end{aligned} 
 $$ 
 This inequality is satisfied  because,  using again Corollary \ref{c:hotspot} (i)  on 
 $\mathcal R ^ j ( K_0)$ and on $\mathcal R ^ j ( K _t)$ for some $j \in \{ 0, \dots, n\}$, we have that 
 $$  \begin{cases}
   \mathcal U^{n+1}   ( v _0)  ( O) >  \mathcal U^{n+1}       ( v _0 ) ( \overline   x)   &  \\
  \noalign{\medskip} 
  \mathcal U^{n+1}   ( v_t ) ( (1-t ) \overline  x + t\overline   y   )  >   \mathcal U^{n+1}   ( v_t ) (  ( 1 -t )  O  + t  \overline   y) 
   \,. &
\end{cases} $$  
 
 \smallskip 
 We conclude that $m = 0$, and our proof is achieved. 
   \qed

\bigskip\bigskip 
 \subsection{Proof of  Brunn--Minkowski inequality \eqref{f:BMl} for  the mixed first eigenvalue}

 As mentioned in the Introduction, our approach is inspired by  the one of Theorem 1 in Colesanti's paper \cite{C96}. 
Although the proof below is essentially self-contained, the reader is referred to that paper for some details, 
that we omit in order to focus mainly on the differences arising from the Neumann portion of the boundary.    We state separately two preliminary lemmas. 

\begin{lemma}\label{l:C1} 
Let $u$ be the mixed ground state of a   convex curvilinear sector   $\Om\in \mathcal K _\alpha$,  
with $\alpha \in (0, \frac  \pi 2]$, and  let $v:= - \log u$. 
Let $$X:=  \Big \{p \in \R ^ 2 \  : p \cdot \nu ' < 0 \, , \ p \cdot \nu ''  < 0   \Big \}\,,$$
where $\nu ', \nu ''$are  the unit outward normals  to $\Om$ along the Neumann sides $S', S''$ of $\Om$.
 Then:
\begin{itemize}
\item[(i)]  $\nabla v: \Om \to  X$ is a diffeomorphism of class $C ^ \infty$;

\smallskip 
\item[(ii)] the Fenchel conjugate $v ^* (p) := \sup _ {x \in \Om } \big (  p \cdot x  - v ( x)\big )$ is of class $C ^ \infty ( X)$, and 
$$
\nabla v^*(p) = (\nabla v)^{-1}(p),
\qquad
\nabla ^2 v^*(p) = \bigl(\nabla ^2 v((\nabla v)^{-1}(p))\bigr)^{-1}.
$$
\end{itemize} 
\end{lemma}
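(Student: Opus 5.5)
By Theorem \ref{t:concavity}, $\nabla^2 v>0$ in $\Om$. Consequently $v$ is strictly convex on the convex set $\Om$, and $\nabla v$ is injective with everywhere invertible Jacobian. Interior elliptic regularity makes $u$, and hence $v$, of class $C^\infty$ in $\Om$. By the inverse function theorem, $\nabla v$ is then a $C^\infty$ diffeomorphism of $\Om$ onto the open set $\nabla v(\Om)$. Part (i) therefore reduces to the identification $\nabla v(\Om)=X$. Once (i) holds, part (ii) is standard convex duality. For $p\in X$ the supremum defining $v^*(p)$ is attained exactly at the unique $x=(\nabla v)^{-1}(p)$, since $x\mapsto p\cdot x-v(x)$ is strictly concave with a critical point there. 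Hence $v^*(p)=p\cdot(\nabla v)^{-1}(p)-v((\nabla v)^{-1}(p))$ is $C^\infty$. Differentiating gives $\nabla v^*=(\nabla v)^{-1}$, and differentiating once more, via the inverse function theorem, gives the Hessian formula.

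To show $\nabla v(\Om)\subseteq X$, take $x\in\Om$ and use the monotonicity in Corollary \ref{c:hotspot}(iii). Let $y$ be the orthogonal projection of $x$ onto the line containing $S'$. If $y\in S'$, the segment $[x,y]$ lies in $\overline\Om$ by the geometric observation in the proof of Claim I. Thus $u$ strictly increases toward $S'$ along the direction $-\nu'$. This gives $\nabla u(x)\cdot(-\nu')>0$, hence $\nabla v(x)\cdot\nu'=-\nabla u\cdot\nu'/u<0$. Strictness is delicate: (iii) only yields a nonstrict sign for the derivative at $x$. Instead, I would use the strict concavity of $u^{1/2}$ (resp. $\log u$) along $[x,y]$ together with the vanishing normal derivative at $y$, which gives a strict inequality at interior points. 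If the projection does not fall on $S'$, I would argue via concavity: on the segment from $x$ toward $O$ the derivative is positive by (ii). I would combine this with the cone structure, or with the fact that $\partial_{\nu'}u$ vanishes on $S'$ and is a solution of a linear equation whose strict sign follows from the maximum principle applied to the reflected function. A cleaner route is to note that $w:=-\partial_{\nu'}u$ solves $-\Delta w=\lambda w$, vanishes on $S'$, and has favourable sign on $\Gamma_D$ and $S''$. The strong maximum principle then gives $w>0$, which I would adopt as the primary argument.

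For the reverse inclusion $X\subseteq\nabla v(\Om)$ I would show that $\nabla v$ is proper onto $X$. Suppose $x_k\to\partial\Om$, and split into cases by the limit point. Near $\Gamma_D$ (away from the vertices), $u\to0$ with $|\nabla u|$ bounded below by Hopf, so $|\nabla v|\to\infty$. Near a point of $S'$, $\nabla v\cdot\nu'\to0$, so $\nabla v$ approaches the boundary ray of $X$. Near $O$, Lemma \ref{l:expansion} gives $\nabla v\to0$, the apex of $X$. Near the (DN) vertices the gradient blows up. Hence every limit of $\nabla v(x_k)$ lies in $\partial X$ or at infinity, so $\nabla v(\Om)$ is both open and relatively closed in the connected set $X$, and therefore equals $X$. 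Alternatively, $v^*$ is finite exactly on $X$ by the same asymptotics. I expect the main obstacle to be the behaviour at the (DN) vertices, where one must check that the blow-up of $|\nabla v|$ happens in every direction, and the strict inclusion $\nabla v(\Om)\subseteq X$.
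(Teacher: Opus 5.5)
Your treatment of the local diffeomorphism and of part (ii) is the same as the paper's. The genuine difference is in proving $X\subseteq\nabla v(\Om)$.

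The paper fixes $p\in X$, maximizes $\Phi_p(x)=p\cdot x-v(x)$ over $\overline\Om$, and excludes boundary maximizers directly:
$\Phi_p\to-\infty$ at $\Gamma_D$; $\partial_{\nu'}\Phi_p=p\cdot\nu'<0$ on $S'$ (similarly on $S''$); and at $O$, where $\nabla v=0$, one gains by moving inward along a direction $d$ with $p\cdot d>0$. The interior maximizer $x_p$ then satisfies $\nabla v(x_p)=p$, and it is reused in (ii).

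You instead prove that $\nabla v:\Om\to X$ is proper and conclude by an open--closed argument in the connected cone $X$. This is valid and uses the same boundary information, just in gradient form rather than through $\Phi_p$. The paper's version is slightly more elementary, since near $\Gamma_D$ it only needs $v\to+\infty$, not gradient blow-up.

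Your worry about the (DN) vertices is unnecessary. For a fixed $x_0\in\Om$, convexity gives $\nabla v(x_k)\cdot(x_k-x_0)\ge v(x_k)-v(x_0)\to+\infty$ whenever $x_k$ tends to a point of $\overline{\Gamma_D}$. Hence $|\nabla v(x_k)|\to\infty$ there, corners and (DN) vertices included, without Hopf's lemma. Blow-up of the norm is all that properness requires; the direction of blow-up is irrelevant.

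For $\nabla v(\Om)\subseteq X$, the first argument you give is exactly the paper's: strict convexity of $v$ on the segment from $x$ to its foot $x'$ on the line of $S'$, plus $\nabla v(x')\cdot\nu'=0$. It is already complete. Since $\alpha\le\frac\pi2$ and the (DN) angle at the endpoint of $S'$ is at most $\frac\pi2$, the foot of the perpendicular from any $x\in\Om$ lies in the open segment $S'$. So the case you hedge against never occurs.

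The maximum-principle route you then declare primary should be dropped or repaired. As written it has a sign error. Since $\nu'$ is outward, the direction from $x$ toward $S'$ is $+\nu'$, and what you need is $\partial_{\nu'}u>0$. Your chain ``$\nabla u\cdot(-\nu')>0$, hence $\nabla v\cdot\nu'<0$'' is inconsistent. With $w=-\partial_{\nu'}u$, the data on $\Gamma_D$ and $S''$ are $\le0$, not favourable.

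Even with $w=\partial_{\nu'}u$, the strong maximum principle alone does not give $w\ge0$ from boundary data, because $\lambda(\Om)>0$. You would need:
\begin{itemize}
\item the weak maximum principle for $-\Delta-\lambda(\Om)$, i.e.\ $\lambda(\Om)<\lambda_1^D(\Om)$;
\item actual proofs of the boundary signs: Corollary \ref{c:hotspot}(ii) on $S''$, and $\nu\cdot\nu'\le0$ on $\Gamma_D$ from convexity and the angle bounds.
\end{itemize}
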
 
\proof

(i) By standard elliptic regularity, $v$ is smooth in $\Om$. Moreover,   
since $\nabla ^ 2 v$   is positive definite in $\Om$ thanks to Theorem \ref{t:concavity}, the map $\nabla v$ turns out to be injective in $\Om$, 
and hence, by applying the inverse function theorem, we deduce that $\nabla v$ is a diffeomorphism of class $C ^ \infty$  from $\Om$ onto its image  $\nabla v (\Om)$. 
It remains to prove that $\nabla v (\Om) = X$.   
To that aim, we choose a system of coordinates with the origin at the (NN)-vertex of $\Om$. 

\smallskip 
Let us prove first the inclusion $X \subseteq \nabla v (\Om)$.
Let $p \in X$, and set
\begin{equation}\label{f:phip} 
\Phi_p(x) :=   p \cdot x   - v (x).
\end{equation}
Taking into account that $\Omega$ is bounded, that $\Phi_p(x)\to -\infty$ as $x$ approaches $\Gamma_D$, and that $v$ is strictly convex,  we can assert that the
maximum of $\Phi _p$ over $\overline{\Omega}$ is uniquely attained. 
Let $x _p \in {\overline \Om}$ denote the point where such maximum is attained. 
We claim that $x_p \in \Omega$, which gives in particular, by optimality, 
\begin{equation}\label{f:critp} 
\nabla v(x_p) = p\,,
\end{equation}
showing that $p \in \nabla v (\Om)$. 

Since, as we have already observed, $\Phi_p(x)\to -\infty$ as $x$ approaches $\Gamma_D$,  we have $x_p \not \in \Gamma_D$.  On the other hand, 
$x_p$ cannot be a point of $ \Gamma _N$,   because  the assumption that $p \in X$ ensures that the outward  normal derivative of $\Phi _p$ at any such point is strictly negative. Indeed, if $x \in S' \setminus \{ 0 \}$, we have 
$$
\partial_{\nu'} \Phi_p (x)
=
p \cdot \nu'  - \partial_{\nu'} v  (x) 
=
p \cdot \nu'  <  0\,,
$$
and similarly if $x \in S'' \setminus \{ 0 \}$. 
It remains to show that $x _p \neq 0$.  By Lemma \ref{l:expansion} we have $\nabla v(0) = 0$, so that 
$$
\nabla \Phi_p(0) = p \neq 0.
$$
Since $p \in X$, there exists a  vector    $d$ pointing inside $\Omega$ such that $p \cdot d > 0$. Then $0$ cannot be a maximizer because for $t$ small we have  
$$
\Phi_p(td) = \Phi_p(0) + t\, p \cdot d + o(t) > \Phi_p(0)\,. 
$$

\medskip

Let us show the converse inclusion $ \nabla v (\Om)  \subseteq X$,  namely that $\nabla v(x) \in X$ 
for every
$x \in \Omega$. Consider the Neumann side $S'$. Since $\Omega$ is convex, there exists a point $x' \in S'$ such that
$$
x - x' =  - t \nu ' \quad \text{for some } t > 0.
$$
By  the strict convexity of $v$, we have 
$$
(\nabla v(x) - \nabla v(x')) \cdot (x - x') > 0.
$$
 Since $\nabla v ( x') \cdot \nu' = 0$,  we deduce that 
$$
\nabla v(x)  \cdot \nu'  = 
 (\nabla v(x) - \nabla v(x')) \cdot \nu' <  0.
$$
In the analogous way, we see that $\nabla v(x) \cdot \nu '' < 0$, and we conclude that $\nabla v ( x) \in X$. 

\medskip
(ii) As shown above, for every 
 $p \in X$ the function
$\Phi_p$ defined in \eqref{f:phip} admits exactly one maximiser $x_p$, which is characterised by  the equality \eqref{f:critp}. Hence, 
$$v^*(p) = (\nabla v)^{-1}(p) \cdot p - v( (\nabla v)^{-1}(p) ).
$$
Since  the map $\nabla v$ is a smooth diffeomorphism, it follows  that $v ^*$ is of class $C ^ \infty(X)$, and the expressions of $\nabla v ^ * (p)$ and $\nabla^2 v ^ * (p)$
are easily obtained  by differentiating  the above equality with respect to $p$.
\qed

\bigskip 

\begin{lemma}\label{l:C2} Let $\Om\in \mathcal K _\alpha$, for some $\alpha \in (0, \frac  \pi 2]$, and let  $v_i:= - \log u_i$, being  $u_i$ the mixed ground state of $\Om$. For every $z \in \Om _t:= (1-t ) \Om _0 + t \Om _ 1$,  let $w := ( 1-t) v _0 \square t v _1$ denote the infimal convolution
$$w ( z):= \inf \Big \{ ( 1 -t) v _0(x) + t v _ 1 (y) \ :\ x \in \Om _0\, , \ y \in \Om _1 \,,\ ( 1-t ) x + t y = z \Big \}\,.$$ 
Then it holds: 
\begin{itemize}
\item[(i)] for every $z \in \Om _ t$, $w (z)$ is  attained at some pair of points $(x, y)\in  \Om _0 \times \Om _1$; 
\smallskip

\item[(ii)] $w$ is of class $C ^ \infty (\Om _ t)$, and, if $w ( z)$ is attained  at $(x, y)$, it holds
$$\nabla w ( z) = \nabla  v_0   ( x) = \nabla   v_1    ( y) \, , \qquad \nabla^2 w ( z) = [ (1-t) (\nabla  v_0  ( x)) ^ { -1}  +t (\nabla    v_1    ( y) ) ^ { -1}  ] ^ { -1}\,; $$ 

\item[ (iii)] if $\Gamma ^ t _ D$ and $\Gamma ^ t _ N$  denote respectively the Dirichlet and Neumann boundaries of   $\Om _ t$
according to the decomposition adopted in the Introduction,   
it holds
$$w ( z) \to + \infty \ \text{ as } z \to \Gamma ^ t _ D \,, \qquad \partial _\nu  w   = 0 \ \text { on } \Gamma ^ t _N \,.$$ 
  where $\nu$ is the outward unit normal to $\Om _t$.   
 
\end{itemize}

\end{lemma}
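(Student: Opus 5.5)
The plan is to reduce everything to Lemma \ref{l:C1} and standard facts on infimal convolutions of strictly convex functions, via Fenchel duality. (The statement should read: $\Om_0,\Om_1\in\mathcal K_\alpha$, $u_i$ the mixed ground state of $\Om_i$, and $(\nabla^2 v_i)^{-1}$ in (ii).) Each $v_i=-\log u_i$ is smooth and strictly convex by Theorem \ref{t:concavity}. Since $\Om_0,\Om_1$ have the same (NN)-vertex and the same Neumann half-lines, the cone $X$ of Lemma \ref{l:C1} is the same for both, and also for $\Om_t$. The general identity $\big((1-t)v_0\,\square\, t v_1\big)^*=(1-t)v_0^*+t v_1^*$ holds for the scaled infimal convolution defined in the statement. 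By Lemma \ref{l:C1}(ii) the right-hand side is smooth on $X$, with Hessian $(1-t)\nabla^2v_0^*+t\nabla^2v_1^*$ positive definite.

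For (i), fix $z\in\Om_t$ and minimize $(1-t)v_0(x)+tv_1(y)$ over the compact set $\{(x,y)\in\overline\Om_0\times\overline\Om_1:(1-t)x+ty=z\}$, where $v_i$ is extended to $+\infty$ on $\Gamma_D$. Lower semicontinuity gives a minimizer $(x,y)$, and it is finite because $z\in\Om_t$ admits a decomposition with $x,y\notin\Gamma_D$. I then rule out $x$ or $y$ on $\Gamma_N\cup O$. I plan to do this via duality. Pick $p\in X$, which will be $\nabla w(z)$, and set $x=\nabla v_0^*(p)$, $y=\nabla v_1^*(p)$; these lie in the interiors $\Om_0,\Om_1$ by Lemma \ref{l:C1}(i). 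The map $p\mapsto (1-t)\nabla v_0^*(p)+t\nabla v_1^*(p)=\nabla\big((1-t)v_0^*+tv_1^*\big)(p)$ has to be shown to map $X$ onto $\Om_t$. Granting this, for every $z\in\Om_t$ there is $p$ with both points interior and $\nabla v_0(x)=\nabla v_1(y)=p$. By convexity of the constrained problem, this Lagrange condition characterizes the unique minimizer, which gives (i).

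Surjectivity onto $\Om_t$ is the main obstacle. I would argue as in Lemma \ref{l:C1}. For $z\in\Om_t$, maximize $\Psi_z(p)=p\cdot z-(1-t)v_0^*(p)-tv_1^*(p)$ over $X$. Its maximizer $p$ gives $\nabla((1-t)v_0^*+tv_1^*)(p)=z$. The point is to prove that $\Psi_z$ does not escape to infinity or to $\partial X$.

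To control $\Psi_z$, use $v_i^*(p)\ge p\cdot a-v_i(a)$ for any interior point $a$, together with the fact that $v_i^*(p)$ behaves like the support function $h_{\Om_i}(p)$ at infinity. Then $\Psi_z(p)\approx p\cdot z-h_{\Om_t}(p)\to-\infty$ as $|p|\to\infty$ in directions where $z$ is strictly inside. Near $\partial X$, the relevant directions are along $\nu'$ or $\nu''$, where $h_{\Om_t}=0$ on the corresponding boundary rays. There I must use that $z$ is off the Neumann lines, together with a first-order argument similar to the one excluding $x_p\in\Gamma_N$ in Lemma \ref{l:C1}.

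For (ii), $w=\big((1-t)v_0^*+tv_1^*\big)^*$, whose Legendre transform is a smooth diffeomorphism $X\to\Om_t$ with positive definite Hessian. Hence $w\in C^\infty(\Om_t)$ with $\nabla w(z)=p=\nabla v_0(x)=\nabla v_1(y)$. Inverting the Hessian of the conjugate gives $\nabla^2w=[(1-t)(\nabla^2v_0)^{-1}+t(\nabla^2v_1)^{-1}]^{-1}$.

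For (iii), if $z\to\Gamma_D^t$, any decomposition $z=(1-t)x+ty$ forces $x\to\Gamma_D^0$ or $y\to\Gamma_D^1$. This is because $\Gamma_D^t$ consists of Minkowski combinations of points with a common outer normal, and $u_i=0$ there. Hence $w(z)\ge (1-t)\min v_0+t\min v_1+\text{(blow-up term)}\to+\infty$.

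For the Neumann condition, let $z\to z_0\in S'_t$. Then $\nabla w(z)\cdot\nu'=p\cdot\nu'$ with $p=\nabla v_0(x)$. Since $z\in S'_t$ forces $x\in S'_0$ and $y\in S'_1$, the Neumann condition for $v_0$ gives $p\cdot\nu'\to0$. Continuity of $\nabla w$ up to $\Gamma_N^t$ then follows from continuity of $\nabla v_i$ up to $\Gamma_N$ and of the minimizer $(x,y)$ in $z$.
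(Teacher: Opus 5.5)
Your route for (i) differs from the paper's. The paper works in the primal: it takes a limit $(x,y)$ of a minimizing sequence and excludes $x\in\Gamma_N$ or $x=O$ by admissible variations such as $x_\e=x-\e\nu'$, $y_\e=y+\frac{1-t}{t}\e\nu'$, combined with the sign $\nabla v_1(y)\cdot\nu'<0$ from Lemma \ref{l:C1}. You instead derive (i) from the surjectivity of $\nabla w^*$, where $w^*=(1-t)v_0^*+tv_1^*$, onto $\Om_t$, plus the sufficiency of the Lagrange condition. That sufficiency step is correct and even gives uniqueness of the minimizer.

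The gap is that in your scheme both (i) and (ii) rest entirely on $\nabla w^*(X)=\Om_t$, and this is exactly what you leave unproved. Your coercivity at infinity is fine, but your treatment of $\partial X$ rests on a wrong picture. $X$ is the open cone spanned by $S'$ and $S''$ themselves, so its boundary rays point along the Neumann sides, not along $\nu',\nu''$. Moreover $h_{\Om_t}>0$ on $\overline X\setminus\{0\}$. The support function $h_{\Om_t}$ vanishes instead on the normal cone at $O$, spanned by $\nu',\nu''$, which meets $\overline X$ only at $0$. Finally, support-function asymptotics say nothing about maximizing sequences converging to a finite point of $\partial X$, and there Lemma \ref{l:C1} gives no information on $v_i^*$.

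\textbf{Closing the gap within your approach.} The missing ingredient is the location of $\partial v_i^*(\bar p)$ for $\bar p\notin X$.
\begin{itemize}
\item Since $v_i^*$ is finite and convex on $\R^2$, $\Psi_z$ is concave on $\R^2$ and, by your coercivity bound, has a maximizer $\bar p$. Then $z\in(1-t)\partial v_0^*(\bar p)+t\,\partial v_1^*(\bar p)$, where $\partial v_i^*(\bar p)$ is the set of maximizers of $\bar p\cdot x-v_i(x)$ on $\overline\Om_i$ (with $v_i=+\infty$ on $\Gamma_D$).
\item If $\bar p\notin X$, these maximizers are not interior, because $\nabla v_i(\Om_i)=X$.
\item If $\bar p\cdot\nu'\ge0>\bar p\cdot\nu''$ (or symmetrically), a maximizer on $S''_i$ would require $\bar p-\nabla v_i=c\,\nu''$ with $c\ge0$. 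Since $\partial_{\nu''}v_i=0$ there, dotting with $\nu''$ gives $c=\bar p\cdot\nu''<0$, a contradiction. Hence both maximizers lie on the line containing $S'$, and so does $z$.
\item If $\bar p\cdot\nu'\ge0$ and $\bar p\cdot\nu''\ge0$, then $\bar p\cdot x\le0$ on the cone, using $\alpha\le\frac\pi2$. By Corollary \ref{c:hotspot}, $O$ is the unique minimum point of $v_i$, so both maximizers equal $O$ and $z=O$.
\end{itemize}
In every case $z\notin\Om_t$, so $\bar p\in X$ and $\nabla w^*(\bar p)=z$.

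\textbf{The paper's shortcut.} Alternatively, prove (i) directly as the paper does. Surjectivity is then immediate: $p:=\nabla v_0(x)=\nabla v_1(y)$ lies in $X$ by Lemma \ref{l:C1}, and $\nabla w^*(p)=(1-t)x+ty=z$. Once this step is supplied, your (ii) and (iii) coincide with the paper's arguments.
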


\proof  
(i) Let $( x_h, y _h) \in \Om _0 \times \Om _1$, with $( 1-t ) x_h + t y _h=z$, be  a minimizing sequence for $w ( z)$.  Up to a (not relabeled)  subsequence, we have
$( x_h, y _h) \to ( x,  y)  \in \overline \Om _0 \times  \overline \Om _1$.  
 Let us show first  that $x$ (resp.\ $y$)  cannot lie in $\Gamma _ D (\Om _0)$ (resp.\ $\Gamma _ D (\Om _1)$), 
then 
that $x$ cannot lie in $\Gamma _ N (\Om _0)$ (resp.\ $\Gamma _ N (\Om _1)$), and finally that $x$  cannot coincide with the (NN) vertex of  $\Om _0$
(resp.\ $\Om _1$).

The possibility that $ x  \in \Gamma _ D (\Om _0)$ or  $ y  \in \Gamma _ D (\Om _1)$ is  immediately ruled out by the fact that $v_i $ tends to $+ \infty$ towards $\Gamma _ D (\Om _i)$.  
In order to exclude that $ x \in \Gamma _ N  (\Om _0)$ or $ y \in \Gamma _ N  (\Om _1)$, let us assume for definiteness that $ x$ belongs to  a Neumann side $S'$ of $\Om _0$,   the other case being analogous. 
Denoting by $\nu'$ be the unit outer normal to $\Om _0$ along $S'$,  
we observe that 
the point $y$ cannot lie on the corresponding Neumann side of $\Om _1$  (i.e., the side of $\Om _1$ orthogonal to $\nu'$), otherwise 
$z=(1-t) x+t y$ would belong to the corresponding Neumann side of $\Omega_t$  (i.e., the side of $\Om _t$ orthogonal to $\nu'$), against the assumption that 
$z$ belongs to $\Omega_t$.  Then we can consider, for $\varepsilon>0$ small, the admissible variation 
\begin{equation}\label{f:avar} 
x_\varepsilon= x- \varepsilon \nu'\,, 
\qquad
y_\varepsilon= y+ \frac{1-t}{t}\varepsilon\nu' , 
\end{equation} 
which contradicts the minimality at $( x,  y)$, since
$$
\begin{aligned} 
\frac{d}{d \varepsilon} \Big (  (1-t)v_0(x_\varepsilon)+t v_1(y_\varepsilon) \Big ) \Big | _{\varepsilon = 0 } 
& = - (1-t)\big(\nabla v_0( x)-\nabla v_1( y)\big)\cdot \nu' 
\\
& = 
(1-t)\big(\nabla v_1(y)\big)\cdot \nu '   <  0\,,
\end{aligned} 
$$ 
where the last inequality follows from Lemma \ref{l:C1}. 

Finally, the  possibility that $ x$ or $ y$  agrees with the (NN)-vertex of the corresponding   curvilinear sector   can be excluded in a similar way: for instance, if $ x$ coincides with the (NN)-vertex of $\Om_0$, we can consider a perturbation as in \eqref{f:avar}, with $- \nu'$ replaced by a vector $h$ lying in the open cone $X$ defined in Lemma \ref{l:C1}.    The minimality at $( x, y)$ is contradicted  by the same computation as above, taking into account that $\nabla v_0 (  x) = 0.$

\smallskip
(ii) Since $w ^ * = (1-t) v_0 ^* + t v _ 1 ^*$ (see \cite[Theorem 16.4]{Rock}), we infer from Lemma \ref{l:C1} (ii) that $w^*$ is of class $C ^ \infty (X)$, with $\nabla ^ 2 w ^*$ positive definite in $X$. Thus, $\nabla w ^*$ is a diffeomorphism of class $C ^ \infty$ from $X$ onto $\nabla w ^* (X)$. By arguing in  analogous way as in the proof of Lemma \ref{l:C1}, one gets the equality $\nabla w ^* (X) = \Om _t$, and consequently that $w $ (which agrees with $w^{**}$, see  \cite[Theorem 16.4]{Rock} ) is of class $C ^ \infty (\Om _t)$. 
   Then the equalities $\nabla w ( z) = \nabla w ( x) = \nabla w ( y)$ and $(\nabla^2 w ( z))  ^ { -1}  =  (1-t) (\nabla w ( x)) ^ { -1}  +t (\nabla w ( y) ) ^ { -1}  $ follow by  the same arguments as in the proof of Theorem 1 in \cite{C96}.

\smallskip
(iii)  Let us prove first that, if $\{ z _h \}$ is a sequence of points in $\Om _t$ such that
$z _h \to z \in \Gamma^t _ D$, we have
$w ( z_h) \to + \infty$.  
 Let $(x_h, y _h) \in \Om _ 0 \times \Om _1$ be such that 
$z_h=(1-t) x_h+t y_h$  and $w ( z_h) = ( 1- t) v _0 ( x _h) + t v _ 1 ( y _h)$.   Up to subsequences, let $(x_h, y _h) \to ( x,  y)$. 
Thus, by passing to the limit, we obtain 
$z = (1-t)  x + t  y$. Since  $z\in \Gamma ^t_ D$, it cannot be obtained as a Minkowski combination of points which are not 
in the Dirichlet portions of $\Om _0$ and $\Om _1$.  

Therefore, $v _0 ( x _h)$ and $v _ 1 ( y _h)$ tend to $+ \infty$, and hence the same holds for $w ( z_h)$. 
 
Let us now prove that the normal derivative of $w$ vanishes on $\Gamma ^ t _N$. 
Let $z$ be a point of $\Gamma ^ t_N$ and let $ \{ z _h \} \subset \Omega_t$ be a sequence converging to $z$.  For each $h$, let $(x_h,y_h)$ be a minimizing pair in the definition of $w(z_h)$, with
$z_h=(1-t)x_h+t y_h$. Since the three domains have the same Neumann directions, we have 
$$
d_t(z_h)
=
(1-t)d_0(x_h)+t d_1(y_h),
$$
where $d_t$ denotes the distance from the boundary of $\Om _t$. Since $z_h\to z \in \Gamma ^ t _N$, we have that  $d_t (z_h)\to 0$, and therefore also 
$d_0(x_h)\to 0$ and 
$d_1(y_h)\to 0$, meaning that 
$x_h$ and $y_h$ approach the corresponding Neumann sides of $\Om _0$ and $ \Om _1$. 
By local boundary regularity up to flat Neumann sides, away from the vertices, we have $
\nabla v_j(x)\cdot \nu  \to 0$ when $x $ approaches $\Gamma _N$ from inside the domain, locally away from the vertices. 
Applying this to $x_h$ and $y_h$, we get
$$
\nabla v_0(x_h)\cdot \nu \to 0,
\qquad
\nabla v_1(y_h)\cdot \nu \to 0.
$$
Since
$$
\nabla w(z_h)=\nabla v_0(x_h)=\nabla v_1(y_h),
$$
it follows that
$\nabla w(z_h)\cdot \nu \to 0$, namely the 
interior trace of the normal derivative of $w$ on $\Gamma ^ t _N$ exists and satisfies
$
\partial_{\nu} w(z)=0.
$ \qed

\bigskip
\bigskip
We are now in a position to give the proof of  inequality   \eqref{f:BMl}.   We consider the function defined for every  $z \in \Om _ t$ by
$$\overline u (z): = e ^ { - w ( z)} \,,$$
where $w (z):= ( 1-t) v _0 \square t v _1$ is the infimal convolution of $v _ 0:= - \log u _0$ and $v _1 := - \log u _ 1$, 
defined according to Lemma \ref{l:C2}.

By using the second equality in Lemma  \ref{l:C2} (ii), and the convexity of the map $A \to {\rm tr} ( A^ {-1})$, we deduce that
$$\Delta w(z)  \leq  ( 1- t) \Delta v_0 (x) + t \Delta v _ 1 (y) \,.$$
Then, by exploiting the PDEs satisfied by $v _i$ in $\Om _i$, which reads
$$\Delta v _ i = \lambda (\Om _i) + |\nabla v _ i | ^ 2 \,,$$
and the first equality in Lemma \ref{l:C2} (ii), we obtain   
$$\Delta w(z)  \leq  ( 1- t) \lambda (\Om _0) + t \lambda (\Om _ 1) + |\nabla w ( z)| ^ 2\,.$$ 

It follows that $\overline u$ satisfies 
$$\Delta \overline u \geq -  [ ( 1- t) \lambda (\Om _0) + t \lambda (\Om _ 1) ] \overline u \qquad \text{ in } \Om _ t \,.$$ 
We multiply the above inequality by $\overline u$, and we integrate over $\Om _t$. Taking into account that, by Lemma \ref{l:C2} (iii)  it holds
$$\overline u = 0 \ \text{ on } \Gamma ^ t _ D \,,  \qquad \partial _\nu \overline u  = 0 \ \text{ on } \Gamma ^ t _N \,,$$ 
we obtain
$$\int _{ \Om _ t}  |\nabla \overline u | ^ 2 \leq  [ ( 1- t) \lambda (\Om _0) + t \lambda (\Om _ 1) ]  \int _{ \Om _ t}  |\overline u | ^ 2 \,.$$ 
Then   \eqref{f:BMl}   follows from the variational characterization of $\lambda (\Om _ t)$. \qed

 \section*{Acknowledgments} D.B.  would like to thank the Isaac Newton Institute for Mathematical Sciences, Cambridge and the Simons Foundation for support and hospitality during the programme {\it Geometric Spectral Theory} where part of the  work on this paper was undertaken.   This work was supported by EPSRC grant no EP/Z000580/1. D.B.  was 
 also partially supported by the ANR project STOIQUES financed by the French Agence Nationale de la Recherche (ANR-24-CE40-2216).


\begin{thebibliography}{10}

\bibitem{AR23}
{N.} Aldeghi and {J.} Rohleder, \emph{Inequalities between the lowest
  eigenvalues of {L}aplacians with mixed boundary conditions}, J. Math. Anal.
  Appl. \textbf{524} (2023), no.~1, Paper No. 127078, 16.

\bibitem{AR25}
{N.} Aldeghi and {J.} Rohleder, \emph{On the first eigenvalue and eigenfunction
  of the {L}aplacian with mixed boundary conditions}, J. Differential Equations
  \textbf{427} (2025), 689--718.

\bibitem{ALL}
{O.} Alvarez, {J.-M.} Lasry, and {P.-L.} Lions, \emph{Convex viscosity
  solutions and state constraints}, J. Math. Pures Appl. (9) \textbf{76}
  (1997), no.~3, 265--288.

\bibitem{AnClHa}
{B.} Andrews, {J.} Clutterbuck, and {D.} Hauer, \emph{Non-concavity of the
  {R}obin ground state}, Camb. J. Math. \textbf{8} (2020), no.~2, 243--310.

\bibitem{BB99}
{R.} Ba\~nuelos and {K.} Burdzy, \emph{On the ``hot spots'' conjecture of {J}.\
  {R}auch}, J. Funct. Anal. \textbf{164} (1999), no.~1, 1--33.

\bibitem{BerPac89}
{H.} Berestycki and {F.} Pacella, \emph{Symmetry properties for positive
  solutions of elliptic equations with mixed boundary conditions}, J. Funct.
  Anal. \textbf{87} (1989), no.~1, 177--211.

\bibitem{B85}
{C.} Borell, \emph{Greenian potentials and concavity}, Math. Ann. \textbf{272}
  (1985), no.~1, 155--160.

\bibitem{BL76}
{H.J.} Brascamp and {E.H.} Lieb, \emph{On extensions of the {B}runn-{M}inkowski
  and {P}r\'{e}kopa-{L}eindler theorems, including inequalities for log concave
  functions, and with an application to the diffusion equation}, J. Functional
  Analysis \textbf{22} (1976), no.~4, 366--389.

\bibitem{CafFri}
{L.A.} Caffarelli and {A.} Friedman, \emph{Convexity of solutions of semilinear
  elliptic equations}, Duke Math. J. \textbf{52} (1985), no.~2, 431--456.

\bibitem{CGY26}
{H.} Chen, {C.} Gui, and {R.} Yao, \emph{Uniqueness of critical points of the
  second {N}eumann eigenfunctions on triangles}, Invent. Math. \textbf{244}
  (2026), no.~1, 299--353.

\bibitem{CY18}
{H.} Chen and {R.} Yao, \emph{Symmetry and monotonicity of positive solution of
  elliptic equation with mixed boundary condition in a spherical cone}, J.
  Math. Anal. Appl. \textbf{461} (2018), no.~1, 641--656.

\bibitem{C96}
A.~Colesanti, \emph{Brunn-{M}inkowski inequalities for variational functionals
  and related problems}, Adv. Math. \textbf{194} (2005), no.~1, 105--140.

\bibitem{CFrobin}
{G.} Crasta and {I.} Fragal\`a, \emph{Concavity properties of solutions to
  {R}obin problems}, Camb. J. Math. \textbf{9} (2021), no.~1, 177--212.

\bibitem{GuanMa}
{P.} Guan and {X.} Ma, \emph{Convex solutions of fully nonlinear elliptic
  equations in classical differential geometry}, Geometric evolution equations,
  Contemp. Math., vol. 367, Amer. Math. Soc., Providence, RI, 2005,
  pp.~115--127.

\bibitem{Hat24}
{L.} Hatcher, \emph{First mixed Laplace eigenfunctions with no hot spots},
  Proc. Amer. Math. Soc. \textbf{152} (2024), no.~12, 5191--5205.

\bibitem{JM20}
{C.} Judge and {S.} Mondal, \emph{Euclidean triangles have no hot spots}, Ann.
  of Math. (2) \textbf{191} (2020), no.~1, 167--211.

\bibitem{Kbook}
{B.} Kawohl, Rearrangements and convexity of level sets in {PDE}, Lecture Notes
  in Mathematics, vol. 1150, Springer-Verlag, Berlin, 1985.

\bibitem{Kenn}
{A.U.} Kennington, \emph{Power concavity and boundary value problems}, Indiana
  Univ. Math. J. \textbf{34} (1985), no.~3, 687--704.

\bibitem{K83}
{N.J.} Korevaar, \emph{Convex solutions to nonlinear elliptic and parabolic
  boundary value problems}, Indiana Univ. Math. J. \textbf{32} (1983), no.~4,
  603--614.

\bibitem{KorLew}
{N.J.} Korevaar and {J.L.} Lewis, \emph{Convex solutions of certain elliptic
  equations have constant rank {H}essians}, Arch. Rational Mech. Anal.
  \textbf{97} (1987), no.~1, 19--32. \MR{856307}

\bibitem{Leb72}
{N.N.} Lebedev, Special functions and their applications, revised ed., Dover
  Publications, Inc., New York, 1972, Unabridged and corrected republication.

\bibitem{PL1}
{L.} Leindler, \emph{On a certain converse of {H}\"older's inequality. {II}},
  Acta Sci. Math. (Szeged) \textbf{33} (1972), no.~3-4, 217--223.

\bibitem{LY26}
{R.} Li and {R.} Yao, \emph{Monotonicity of positive solutions to semilinear
  elliptic equations with mixed boundary conditions in triangles}, J. Funct.
  Anal. \textbf{290} (2026), no.~12, Paper No. 111448.

\bibitem{LR17}
{V.} Lotoreichik and {J.} Rohleder, \emph{Eigenvalue inequalities for the
  {L}aplacian with mixed boundary conditions}, J. Differential Equations
  \textbf{263} (2017), no.~1, 491--508.

\bibitem{maklim}
{L. G.} Makar-Limanov, \emph{The solution of the {D}irichlet problem for the
  equation {$\Delta u=-1$} in a convex region}, Mat. Zametki \textbf{9} (1971),
  89--92.

\bibitem{PPR24}
{F.} Pacella, {G.} Poggesi, and {A.} Roncoroni, \emph{Optimal quantitative
  stability for a {S}errin-type problem in convex cones}, Math. Z. \textbf{307}
  (2024), no.~4, Paper No. 79, 19.

\bibitem{PT20}
{F.} Pacella and {G.} Tralli, \emph{Overdetermined problems and constant mean
  curvature surfaces in cones}, Rev. Mat. Iberoam. \textbf{36} (2020), no.~3,
  841--867.

\bibitem{PT21}
{F.} Pacella and {G.} Tralli, \emph{Isoperimetric cones and minimal solutions
  of partial overdetermined problems}, Publ. Mat. \textbf{65} (2021), no.~1,
  61--81.

\bibitem{PM}
Polymath, \emph{{Polymath Research Thread 5: the hot spot conjecture}},
  2012-13, available at
  https://polymathprojects.org/2013/08/09/polymath7-research-thread-5-the-hot-spots-conjecture/.

\bibitem{PL2}
{A.} Pr\'ekopa, \emph{On logarithmic concave measures and functions}, Acta Sci.
  Math. (Szeged) \textbf{34} (1973), 335--343.

\bibitem{Rock}
{R.T.} Rockafellar, Convex Analysis, Princeton Univ.\ Press, Princeton, NJ,
  1970.

\bibitem{S16}
{B.} Siudeja, \emph{On mixed {D}irichlet-{N}eumann eigenvalues of triangles},
  Proc. Amer. Math. Soc. \textbf{144} (2016), no.~6, 2479--2493.

\bibitem{YCG21}
{R.} Yao, {H.} Chen, and {C.} Gui, \emph{Symmetry of positive solutions of
  elliptic equations with mixed boundary conditions in a super-spherical
  sector}, Calc. Var. Partial Differential Equations \textbf{60} (2021), no.~4,
  Paper No. 130, 25.

\bibitem{YCL18}
{R.} Yao, {H.} Chen, and {Y.} Li, \emph{Symmetry and monotonicity of positive
  solutions of elliptic equations with mixed boundary conditions in a
  super-spherical cone}, Calc. Var. Partial Differential Equations \textbf{57}
  (2018), no.~6, Paper No. 154, 28.

\bibitem{Zhu01}
{M.} Zhu, \emph{Symmetry properties for positive solutions to some elliptic
  equations in sector domains with large amplitude}, J. Math. Anal. Appl.
  \textbf{261} (2001), no.~2, 733--740.

\end{thebibliography}
\end{document}